\theoremstyle{plain}
\newtheorem{thm}{Theorem}[section]
\newtheorem{prop}[thm]{Proposition}
\newtheorem{cor}[thm]{Corollary}
\newtheorem{lem}[thm]{Lemma}
\newtheorem{theorem*}{Theorem}[]
\theoremstyle{definition}
\newtheorem{defn}[thm]{Definition}
\newtheorem{exmp}[thm]{Example}
\newtheorem{alg}[thm]{Algorithm}
\theoremstyle{remark}
\newtheorem{rem}[thm]{Remark}
\newcommand{\prodscal}[2]{\left<#1 ,#2\right>}
\newcommand{\C}{\mathbb{C}}
\newcommand{\N}{\mathbb{N}}
\newcommand{\R}{\mathbb{R}}
\newcommand{\val}{v}
\begin{document}


\title{Validity proof of Lazard's method\\
       for CAD construction}

\author{Scott McCallum}
\address{Department of Computing, Macquarie University,
         NSW 2109, Australia}

\author{Adam Parusi\'nski}
\address {Univ. Nice Sophia Antipolis, CNRS,  LJAD, UMR 7351, 06108 Nice, France}

\author{Laurentiu Paunescu}
\address{School of Mathematics and Statistics, University of Sydney,
         NSW 2006, Australia}

\subjclass[2000]{Primary: 13F25. Secondary: 13J15, 26E10}
\subjclass[2010]{
14P10  	
68W30  	
}

\keywords{Cylindrical algebraic decomposition, Lazard valuation, Puiseux with parameter theorem}

\begin{abstract}
In 1994 Lazard proposed an improved method for cylindrical algebraic decomposition (CAD). 
The method comprised a simplified projection operation together with a generalized cell lifting 
(that is, stack construction) technique.  
For the proof of the method's validity 
Lazard introduced a new notion of valuation of a multivariate polynomial at a point.  
However a gap in one of the key supporting results for his proof was subsequently noticed.
In the present paper we provide a complete validity proof of Lazard's method. 
Our proof is based on the classical parametrized version of Puiseux's theorem and basic 
properties of Lazard's valuation.  
This result is significant because Lazard's method can be applied to any finite family 
of polynomials, without any assumption 
on the system of coordinates. It therefore 
has wider applicability and may be more efficient
than other projection and lifting schemes for CAD.
\end{abstract}

\maketitle

\section{Introduction}
{\em Cylindrical algebraic decomposition (CAD)} of Euclidean $n$-space $\mathbb{R}^n$,
relative to a given set of $n$-variate integral polynomials $A$,
is an important tool in computational algebra and geometry.
It was introduced by Collins \cite{Collins:75} as the key component of a
method for quantifier elimination (QE) for real closed fields.
Applications of CAD and other QE techniques
include robot motion planning \cite{Schwartz_Sharir:83},
stability analysis of differential equations \cite{Hong_Liska_Steinberg:97},
simulation and optimization \cite{Weispfenning:97},
epidemic modelling \cite{BKNW:06}, and
programming with complex functions \cite{DBEW:12}.
A key operation for CAD construction is {\em projection}: the projection of a set
of $n$-variate integral polynomials is a set of $(n-1)$-variate integral
polynomials. In view of its central role, much effort has been devoted to improving
this operation \cite{McCallum:88, McCallum:98, Hong:90, Brown:01}.
Cell lifting, or stack construction, is also an important component of CAD.

In 1994 Lazard \cite{Lazard:94} proposed an improved method for CAD computation.
The method comprised a simplified projection operation together with
a generalized cell lifting process.
 However a gap in one of the key supporting results of \cite%
{Lazard:94} was subsequently noticed \cite{Collins:98, Brown:01}. 
This was disappointing because Lazard's proposed approach has some advantages
over other methods.  In particular, it is relatively simple and it requires no assumption 
on the system of coordinates.

Inherent in \cite{Lazard:94} is a certain notion of valuation of a multivariate polynomial and, more generally, multivariate  Laurent-Puiseux 
(that is fractional meromorphic) series, at a point.  The related notion of the
valuation-invariance of such series in a
subset of $\mathbb{R}^n$ is also implicit in \cite{Lazard:94}. Lazard's proposed approach is
in contrast with the classical approach 
based on the concept of the order (of vanishing) of a multivariate
polynomial or analytic function at a point, and the related concept of
order-invariance, see McCallum \cite{McCallum:88, McCallum:98}.

A partial validity proof of Lazard's projection only was recently published
by McCallum and Hong, \cite{McCallum_Hong:16}. 
It was shown there that
Lazard's projection is valid for CAD construction for so-called
well-oriented polynomial sets. The key underlying results related to
order-invariance rather than valuation-invariance, and the validity proof was
built upon established results concerning improved projection. While this was
an important step forward it was only a partial validation of
Lazard's approach since the method was not proved to work for non
well-oriented polynomials and it did not involve valuation-invariance.

The present paper
provides a complete validity proof of Lazard's method using his notion of valuation.
There is no restriction of the method to well-oriented sets.
This result is significant because Lazard's method has wider applicability
and may be more efficient
than other projection and lifting schemes for CAD.
Moreover, we are hopeful that the use of Lazard's projection in CAD construction
may permit greater exploitation of equational constraints, when present,
in further reducing projection sets in CAD based QE \cite{McCallum:99, McCallum:01}.

This paper is organised as follows.
We first recall Lazard's method and main
claim (Section 2). We then study the concept of Lazard's valuation (Section 3).  
In this paper we only consider
Lazard's valuation for a multivariate polynomial. 
Section 4 contains the statement of a key mathematical result 
(the Puiseux with parameter theorem) underlying
our validation of Lazard's method.
In Section 5 we present our proof of Lazard's main claim using
Lazard's notion of valuation. The main idea of the 
proof is to use monomial test curves that allow us to change the valuation 
invariance along an analytic submanifold to the order invariance.  
In the appendix at the end of the paper we present, 
for the reader convenience, a proof of the Puiseux with parameter theorem.

\section{Lazard's proposed method and claims}
\label{lazard}

Background material on CAD, and in particular its projection operation,
can be found in \cite{Arnon_Collins_McCallum:84, Collins:75, Collins:98,
Collins_Hong:91, Hong:90, McCallum:88, McCallum:98}.
We present a precise definition of the projection operator $P_L$
for CAD introduced by Lazard \cite{Lazard:94}.
Put $R_0 = \mathbb{Z}$ and, for $n \ge 1$, put
$R_n = R_{n-1}[x_n] = \mathbb{Z}[x_1, \ldots, x_n]$.
Elements of the ring $R_n$ will usually be considered
to be polynomials in $x_n$ over $R_{n-1}$.
We shall call a subset $A$ of $R_n$
whose elements are irreducible polynomials of positive degree
and pairwise relatively prime an {\em irreducible basis}.
(This concept is analogous to that of \emph{squarefree basis}
which is used in the CAD literature, for example \cite{McCallum:88}.)

\begin{defn}
[Lazard projection]
Let $A$ be a finite irreducible basis in $R_n$, with $n \ge 2$. 
The \emph{Lazard projection} $P_{L}(A)$ of $A$ 
is the subset of $R_{n-1}$ comprising the following polynomials:
\begin{enumerate}
  \item all leading coefficients of the elements of $A$,
  \item all trailing coefficients (i.e.  coefficients independent of
$x_{n}$) of the elements of $A$,
  \item all discriminants of the elements of $A$, and
  \item all resultants of pairs of distinct elements of $A$.
\end{enumerate}
\end{defn}


\begin{rem}\label{remarkbasis} 
Lazard's projection could alternatively be defined for a (slightly modified)
squarefree basis $A$ in $R_n$, as in \cite{Lazard:94}.
We use an irreducible basis in our definition because experience
has shown that this likely leads to a more efficient CAD algorithm
in practice and on average.
\end{rem}

\begin{rem}\label{remarkproj}
Let $A$ be an irreducible basis. Lazard's projection
$P_L(A)$ is contained in and is usually strictly smaller than the McCallum
projection $P_M(A)$ \cite{McCallum:88, McCallum:98}.
Indeed $P_M(A)$ includes the ``middle coefficients'' (i.e. those coefficients
other than the leading and trailing ones)
of the elements of $A$, which $P_L(A)$ omits. 
In other respects these two projection operators
are the same. 
However $P_L(A)$ contains 
and is usually strictly larger than the Brown-McCallum projection
$P_{BM}(A)$ \cite{Brown:01}. 
Indeed $P_{BM}(A)$ omits the trailing
coefficients of the elements of $A$,
which $P_L(A)$ includes, but in other respects is the same as $P_L(A)$.
This remark notwithstanding, the Lazard projection is still of interest
because of certain limitations of the Brown-McCallum projection.
The two chief drawbacks of the projection $P_{BM}(A)$ are as follows.
First, the method of \cite{Brown:01} could fail in case $A$ is not well-oriented
\cite{McCallum:98, Brown:01}. Second, the method requires that any 0-dimensional
nullifying cells \cite{McCallum:98, Brown:01} in each dimension be identified and
added during CAD construction. These drawbacks are elaborated in \cite{Brown:01}.
\end{rem}

Lazard \cite{Lazard:94} outlined a claimed CAD algorithm for $A \subset R_n$
and $\mathbb{R}^n$ which uses the projection set $P_L(A)$.
The specification of his algorithm requires the following
 concept of his valuation:

\begin{defn}
[Lazard valuation]
Let $K$ be a field.
Let $n \ge 1$, $f \in K[x_1, \ldots, x_n]$ nonzero,
and $\alpha = (\alpha_1, \ldots, \alpha_n) \in K^{n}$.
The \emph{Lazard valuation} (\emph{valuation}, for short)
$v_\alpha(f)$ of $f$ at $\alpha$
is the element $ {\bf v} = (v_1, \ldots, v_n)$ 
of $\mathbb{N}^n$ least (with respect to $\le_{lex}$) 
such that $f$ expanded about $\alpha$ has a term 
\[c(x_1 - \alpha_1)^{v_1} \cdots (x_n - \alpha_n)^{v_n}\] with $c \neq 0$.
(Note that $\le_{lex}$ denotes the {\em lexicographic order} on
$\mathbb{N}^n$ -- see next section.)
\end{defn}

\begin{exmp}
Let $n = 1$. Then $v_\alpha(f)$ is the familiar order 
$\mathrm{ord}_\alpha(f)$ of $f \in K[x_1]$
at $\alpha \in K$. 
Thus, for instance, if $f(x_1) = x_1^2 - x_1^3$ then $v_0(f) = 2$
and $v_1(f) = 1$. As another example,
let $n = 2$ and $f(x_1, x_2) = x_1 x_2^2 + x_1^2 x_2 = x_1 x_2(x_2 + x_1)$. 
Then $v_{(0,0)}(f) = (1,2)$, 
$v_{(1,0)}(f) = (0,1)$, and $v_{(0,1)}(f) = (1,0)$.
\end{exmp}

The above defines $v_\alpha(f)$ for $f \in K[x_1, \ldots, x_n]$ nonzero
and $\alpha \in K^n$. Lazard
\cite{Lazard:94} actually defined $v_\alpha(f)$ 
for nonzero elements $f$ of the much
larger domain of all Laurent-Puiseux (that is, fractional meromorphic) series in
$x_1-\alpha_1, \ldots, x_n-\alpha_n$ over $K$.
In this sense the above is a more limited definition of valuation.
With $K$, $n$ and $f$ as in the above definition, and
$S \subset K^n$, we say $f$ is \emph{valuation-invariant} in $S$ if
the valuation of $f$ is the same at every point of $S$.
Some basic properties of this Lazard valuation, and the
associated notion of valuation-invariance, are presented in Section 3 below. 
Lazard's proposed CAD algorithm also uses
a technique
for ``evaluating'' a polynomial $f \in R_n$ 
at a sample point in $\mathbb{R}^{n-1}$.
This technique is described in slightly more general terms as follows:

\begin{defn}
[Lazard evaluation]
Let $K$ be a field which supports explicit arithmetic computation.
Let $n \ge 2$, take a nonzero element $f$ in $K[x_1, \ldots, x_n]$,
and let $\alpha = (\alpha_1, \ldots, \alpha_{n-1}) \in K^{n-1}$.
The \emph{Lazard evaluation} $f_{\alpha}(x_n)\in K[x_n]$ of $f$ at $\alpha$ is 
defined to be the result of the following process
(which determines also nonnegative integers $v_i$, with $1 \le i \le n-1$):
\begin{enumerate}
\item[] $f_{\alpha} \leftarrow f$
\item[] For $i \leftarrow 1$ to $n-1$ do
  \begin{enumerate}
  \item[] $v_{i} \leftarrow$ the greatest integer $v$ 
          such that $(x_{i}-\alpha_{i})^{v}~|~f_{\alpha}$
  \item[] $f_{\alpha} \leftarrow f_{\alpha}/(x_{i}-\alpha_{i})^{v_{i}}$
  \item[] $f_{\alpha} \leftarrow f_{\alpha}(\alpha_{i},x_{i+1},\ldots,x_{n})$
  \end{enumerate}
\end{enumerate}
\end{defn}

\begin{exmp}
We illustrate the above evaluation method using two simple examples.
For both examples we take $K = \mathbb{Q}$, $n = 3$ and $\alpha = (0,0)$.
We denote $(x_1, x_2, x_3)$ by $(x, y, z)$.
First let $f(x,y,z) = z^2 + y^2 + x^2 - 1$.
After the first pass through the method ($i = 1$) we have $v_1 = 0$ and
$f_{\alpha}(y,z) = z^2 + y^2 - 1$.
After the second pass ($i = 2$) we have $v_2 = 0$ and $f_{\alpha}(z) = z^2 - 1$.
In this case $f_{\alpha}(z) = f(0,0,z)$.
For our second example let $f(x,y,z) = yz - x$.
After the first pass ($i = 1$) we have $v_1 = 0$ and $f_{\alpha}(y,z) = yz$.
After the second pass ($i = 2$) we have $v_2 = 1$ and $f_{\alpha}(z) = yz/y = z$.
In this case $f_{\alpha}(z) \neq f(0,0,z)$, because the latter
polynomial is zero.
\end{exmp}

\begin{rem}
Notice that the assertion ``$f_{\alpha} \neq 0$'' is an invariant of the above process.
With $K$, $n$, $f$, $\alpha$ and the $v_i$
as in the above definition of Lazard evaluation,
notice that $f(\alpha, x_n) = 0$ (identically) if and only if $v_i > 0$, for some $i$
in the range $1 \le i \le n-1$.
With $\alpha_n \in K$ arbitrary,
notice also that the integers $v_i$, with $1 \le i \le n-1$,
are the first $n-1$ coordinates of $v_{(\alpha, \alpha_n)}(f)$.
It will on occasion be handy to refer to the $(n-1)$-tuple 
$(v_1, \ldots, v_{n-1})$ as the
\emph{Lazard valuation of} $f$ \emph{on} $\alpha$.
\end{rem}

\begin{rem}\label{secondremark}
Let   $f \in K[x_1, \ldots, x_n]$ be nonzero,  $\alpha = (\alpha_1, \ldots, \alpha_{n-1}) \in K^{n-1}$, and let  $\mathbf v= (v_1, \ldots, v_{n-1})$ be the Lazard valuation of $f$ on  $\alpha$.  
If we expand $f$ at $\alpha$ 
\begin{align}\label{evaluationexpansion}
f(x_1, \ldots, x_n) = \sum_{u} f^{\mathbf u} (x_n) \prod _{i=1}^{n-1} (x_i-\alpha_i)^{ u_i} 
\end{align}
where $\mathbf u=(u_1, \ldots ,u_{n-1})\in \N^{n-1}$, with coefficients  $f^{\mathbf u} (x_n) \in K[x_n]$, then $f_{\alpha} = f^ {\mathbf v}  $.   This follows from the fact that $\mathbf v$ is the minimum of 
$\{\mathbf u: f^{\mathbf u} \neq 0\}$ for the lexicographic order. 
\end{rem}

One more definition is needed before we can state Lazard's main claim
and his algorithm based on it. This definition is not explicit in
\cite{Lazard:94} -- it was introduced in \cite{McCallum_Hong:16}
to help clarify and highlight Lazard's main claim:

\begin{defn}\label{delineability}
[Lazard delineability]
With $K = \R$ and $x$ denoting $(x_1, \ldots, x_{n-1})$,
let $f$ be a nonzero element of $\R[x,x_n]$ and
$S$ a subset of $\mathbb{R}^{n-1}$.
We say that $f$ is \emph{Lazard delineable} on $S$ if
\begin{enumerate}
\item the Lazard valuation of $f$ on $\alpha$ is the same for each
point $\alpha \in S$;
\item 
there exist finitely many continuous functions
$\theta_1 < \cdots < \theta_k$ from $S$ to $\mathbb{R}$, with $k \ge 0$,
such that, for all $\alpha \in S$, the set of real roots
of $f_\alpha(x_n)$ is $\{\theta_1(\alpha), \ldots, \theta_k(\alpha)\}$
(where in case $k = 0$, this means that, for all $\alpha \in S$,
the set of real roots of $f_\alpha(x_n)$ is empty); 
and in case $k > 0$
\item there exist positive integers $m_1, \ldots, m_k$ such that,
for all $\alpha \in S$ and all $i$, $m_i$ is
the multiplicity of $\theta_i(\alpha)$ as a root of $f_{\alpha}(x_n)$.
\end{enumerate}
When $f$ is Lazard delineable on $S$ 
we refer to the graphs of the $\theta_i$ as the \emph{Lazard sections} of $f$
over $S$, and to the $m_i$ as the \emph{associated multiplicities} of these sections. 
The regions between successive Lazard sections,
together with the region below the lowest Lazard section
and that above the highest Lazard section, are called \emph{Lazard sectors}.
\end{defn}

\begin{prop}
If $f$ is Lazard delineable on $S$ then $f$ is valuation-invariant in every
Lazard section and sector of $f$ over $S$.
\end{prop}

\begin{proof}
Let $(v_1, \ldots, v_{n-1})$ be the common value of the Lazard valuation of $f$ on
$\alpha \in S$. Then $(v_1, \ldots, v_{n-1}, 0)$ is the Lazard valuation of $f$
at a point $(\alpha,z)$ in a Lazard sector of $f$ over $S$.
If $(\alpha, z)$ is in a Lazard section of $f$ over $S$
with associated multiplicity $m$ then
the Lazard valuation of $f$ at this point equals $(v_1, \ldots, v_{n-1}, m)$.
\end{proof}
%

We express Lazard's main claim, essentially the content of his Proposition 5
and subsequent remarks, as follows (as in \cite{McCallum_Hong:16}):

\begin{center}
\medskip
\parbox{11cm}{
Let $A$ be a finite irreducible basis in $R_n$, where $n \ge 2$.
Let $S$ be a connected subset of $\mathbb{R}^{n-1}$.
Suppose that each element of $P_L(A)$ is valuation-invariant in $S$.
Then each element of $A$ is Lazard delineable on $S$, and
the Lazard sections over $S$ of the elements of $A$ are pairwise disjoint.
}
\medskip
\end{center}

Our wording of this claim is different from Lazard's -- we have tried to
highlight and clarify the essence of his assertions.
This claim concerns valuation-invariant lifting in relation to $P_L(A)$:
it asserts that the condition, ``each element of $P_L(A)$ is valuation-invariant
in $S$'', is sufficient for an $A$-valuation-invariant stack
in $\mathbb{R}^n$ to exist over $S$. 
We prove Lazard's main claim in Section 5.
We can now describe Lazard's proposed CAD algorithm
(as in \cite{McCallum_Hong:16}):

\begin{alg}[Valuation-invariant CAD using Lazard projection]\mbox{}

\medskip
\noindent $(\mathcal{I}, \mathcal{S}) \leftarrow \mathrm{VCADL}(A)$

%

\medskip
\noindent \emph{Input}: $A$ is a list of integral polynomials in $x_{1},\ldots,x_{n}$. \\
\noindent \emph{Output}: $\mathcal{I}$ and $\mathcal{S}$ are lists of indices
and sample points, respectively, of the cells
comprising an $A$-valuation-invariant CAD of $\mathbb{R}^n$.

\medskip
\begin{enumerate}
\item If $n>1$ then go to (2).
\item[] Isolate the real roots of the irreducible factors 
        of the nonzero elements of $A$.
        (Algorithms for univariate integral polynomial factorization 
        are given in \cite{Kaltofen:82}.)
\item[] Construct cell indices $\mathcal{I}$ and 
        sample points $\mathcal{S}$ from the real roots. Exit. 
\item   $B \leftarrow$ the finest squarefree basis for $\mathrm{prim}(A)$.
        That is, $B$ is assigned the set of 
        ample irreducible factors of elements
        of the set $\mathrm{prim}(A)$ of primitive parts 
        of elements of $A$ of positive degree.
        (Recall that an ample set in a commutative ring with 1 is a set
         which contains exactly one element in each equivalence class of associates
         \cite{Collins:73, Collins:75}.
        Algorithms for multivariate integral polynomial factorization are given in
        \cite{Kaltofen:82}.)
\item[] $P \leftarrow \mathrm{cont}(A) \cup P_L(B)$.
        ($\mathrm{cont}(A)$ denotes the
        set of contents of elements of $A$.)
\item[] $(\mathcal{I}^{\prime}, \mathcal{S}^{\prime}) \leftarrow \mathrm{VCADL}(P)$.
\item[] $\mathcal{I}\leftarrow$ the empty list. 
        $\mathcal{S}\leftarrow$ the empty list. 
\item[] For each $\alpha = (\alpha_1, \ldots, \alpha_{n-1})$ 
                 in $\mathcal{S}^{\prime}$ do
  \begin{enumerate}
  \item[] Let $i$ be the index of the cell containing $\alpha$.
  \item[] $f^* \leftarrow$ $\prod_{f \in B} f_\alpha$. 
          (Each $f_\alpha$ is constructed using exact arithmetic in
          $\mathbb{Q}(\alpha)$.)
  \item[] Isolate the real roots of $f^*$.
  \item[] Construct cell indices and sample points for Lazard sections and sectors of
          elements of $B$ from $i$, $\alpha$ and the real roots of $f^*$.
  \item[] Add the cell indices to $\mathcal{I}$ and 
          the sample points to $\mathcal{S}$.
  \end{enumerate}
\item[] Exit.
\end{enumerate}
\end{alg}

As mentioned in Remark \ref{remarkbasis}, practical experience with CAD
suggests that it is worthwhile overall to first compute the finest squarefree basis
for $\mathrm{prim}(A)$ in step (2).
In particular, the cost in practice of such computation usually remains a relatively
small part of the total time for CAD.
But the benefit of such computation as the projection and lifting phases of CAD
proceed likely outweighs any additional cost of performing the factorization.
This is due in part to the expected presence of nontrivial factors of repeated
discriminants and resultants \cite{LM:09, BuseMourrain:09}.

The correctness of the above algorithm -- namely, the claim that,
given $A \subset R_n$, it produces a CAD of
$\mathbb{R}^n$ such that each cell of the CAD is valuation-invariant
with respect to each element of $A$ -- follows from
Lazard's main claim by induction on $n$.

\section{Basic properties of Lazard's valuation}

In this section we study Lazard's valuation \cite{Lazard:94}
in the relatively special setting, namely that of multivariate polynomials
over a ring, in which it was defined in the previous section. 
We shall clarify the notion and basic properties of this special valuation,
and identify some relationships between valuation-invariance and
order-invariance.
The content of this section is based on very similar material found in
\cite{McCallum_Hong:15}.

We recall the standard algebraic definition of the term
valuation \cite{Danilov, AtiyahMacDonald, ZariskiSamuel:60}. A mapping $v~:~R -
\{0\}~\rightarrow~\Gamma, $ 
$R$ a ring, into
a totally ordered abelian monoid (written additively) $\Gamma$ is said to be
a \emph{valuation} of $R$ if the following two conditions are satisfied:

\begin{enumerate}
\item $v(fg) = v(f) + v(g)$ for all $f$ and $g$;

\item $v(f + g) \ge \min\{v(f), v(g)\}$, for all $f$ and $g$ (with $f+g \neq
0$).
\end{enumerate}

Perhaps the simplest and most familiar example of a valuation in
algebraic geometry is the order of an $n$-variate polynomial over a field $K$
at a point $\alpha \in K^n$. That is, the mapping 
$\mathrm{ord}_{\alpha}~:~K[x_1, \ldots, x_n] - \{0\}~\rightarrow~\mathbb{N}$ 
defined by 
\begin{equation*}
\mathrm{ord}_{\alpha}(f) = \mbox{the order of}~f~\mbox{at}~\alpha
\end{equation*}
is a valuation of the ring $K[x_1, \ldots, x_n]$. 
The order of $f$ at $\alpha$ is also called ``the multiplicity of $f$ at $\alpha$''.

Let $n \ge 1$. Recall that the \emph{lexicographic order} $\le_{lex}$ 
on $\mathbb{N}^n$ is defined by 
$v = (v_1, \ldots, v_n) \le_{lex} (w_1, \ldots, w_n) = w$ 
if and only if either $v = w$ or there is some $i$, $1 \le i \le n$,
with $v_j = w_j$, for all $j$ in the range $1 \le j < i$, and $v_i < w_i$.
Then $\le_{lex}$ is an \emph{admissible} order on 
$\mathbb{N}^n$ in the sense of 
\cite{BWK:98}. Indeed $\mathbb{N}^n$, together with componentwise addition
and $\le_{lex}$, forms a totally ordered abelian monoid. 
The lexicographic order 
$\le_{lex}$ can be defined similarly on $\mathbb{Z}^n$, forming a totally
ordered abelian group.\newline

Recall the definition of $v_\alpha(f)$ for $f \in K[x_1, \ldots, x_n]$ nonzero
and $\alpha \in K^n$ from Section 2:
$v_\alpha(f)$ is the element $(v_1, \ldots, v_n)$ of $\mathbb{N}^n$ 
least (with respect to $\le_{lex}$) such
that $f$ expanded about $a$ has a term 
$c(x_1 - \alpha_1)^{v_1} \cdots (x_n - \alpha_n)^{v_n}$ with $c \neq 0$. 
Notice that $v_\alpha(f) = (0, \ldots, 0)$ if and only if $f(\alpha) \neq 0$. 
%
Where there is no ambiguity we shall usually omit the qualifier ``Lazard''
from ``Lazard valuation''. We state some basic properties 
of the valuation $v_\alpha(f)$, 
analogues of properties of the familiar order $\mathrm{ord}_\alpha(f)$.
The first property is the fulfilment  of the axioms.

\begin{prop}
Let $f$ and $g$ be nonzero elements of $K[x_1, \ldots, x_n]$ and let $\alpha \in
K^n$. Then $v_\alpha(fg) = v_\alpha(f) + v_\alpha(g)$ and 
$v_\alpha(f + g) \ge_{lex} \min\{v_\alpha(f), v_\alpha(g)\}$ (if $f + g \neq 0$).
\end{prop}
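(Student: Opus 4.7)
The plan is to translate via $x_i \mapsto x_i + \alpha_i$ to reduce to the case $\alpha = 0$, which preserves both sides of each identity. Under this normalization, writing $f = \sum_u c^f_u x^u$ and $g = \sum_u c^g_u x^u$ with $u \in \N^n$ and coefficients in $K$, the valuation $v_0(f)$ is precisely the $\le_{lex}$-minimum of the support $\{u : c^f_u \neq 0\}$; the two claimed identities then become statements about the supports of $f+g$ and $fg$ under lex-minima.

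For the subadditivity, let $w = v_0(f+g)$, so the coefficient of $x^w$ in $f+g$ is $c^f_w + c^g_w \neq 0$. Hence at least one of $c^f_w$, $c^g_w$ is nonzero, giving $w \ge_{lex} v_0(f)$ or $w \ge_{lex} v_0(g)$, and in either case $w \ge_{lex} \min\{v_0(f), v_0(g)\}$. This falls out of the definitions.

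For the product identity, the coefficient of $x^w$ in $fg$ is $\sum_{u + u' = w} c^f_u c^g_{u'}$. Set $w_0 = v_0(f) + v_0(g)$. Any pair $(u, u')$ with $u + u' = w_0$ and $c^f_u c^g_{u'} \neq 0$ must satisfy $u \ge_{lex} v_0(f)$ and $u' \ge_{lex} v_0(g)$. Using that $(\N^n, +, \le_{lex})$ is strictly monotonic under addition, if either inequality were strict we would obtain $u + u' >_{lex} v_0(f) + v_0(g) = w_0$, a contradiction; so $(u, u') = (v_0(f), v_0(g))$ is the unique contributing pair, and the coefficient of $x^{w_0}$ in $fg$ equals $c^f_{v_0(f)} c^g_{v_0(g)} \neq 0$ (using that $K$ is a field). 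For $w <_{lex} w_0$, the same monotonicity forbids any pair with $u + u' = w$ and both $u \ge_{lex} v_0(f)$ and $u' \ge_{lex} v_0(g)$, so every summand vanishes. Together these give $v_0(fg) = w_0$.

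The only step that warrants care is the strict monotonicity of $\le_{lex}$ under addition, namely $u <_{lex} v \Rightarrow u + w <_{lex} v + w$, which is what upgrades the equation $u + u' = w_0$ with both inequalities $\ge_{lex}$ to equality of the individual exponents. This follows immediately from inspecting the first coordinate where $u$ and $v$ disagree and is implicit in the excerpt's description of $\N^n$ under $\le_{lex}$ as a totally ordered abelian monoid. Beyond this, the argument is a routine bookkeeping on supports, and I do not anticipate any genuine obstacle.
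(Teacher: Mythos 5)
Your proof is correct and is essentially the detailed version of the paper's one-line argument, which merely cites the fact that $(\mathbb{N}^n, +, \le_{lex})$ is a totally ordered abelian monoid. You correctly pinpoint that the product identity also relies on \emph{strict} monotonicity of addition under $\le_{lex}$ (equivalently, order-cancellativity of $\mathbb{N}^n$), a property that an arbitrary totally ordered abelian monoid need not enjoy and which the paper's terse proof leaves implicit.
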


\begin{proof}
These claims follow since $\mathbb{N}^n$, together with componentwise
addition and $\le_{lex}$, forms a totally ordered abelian monoid.
\end{proof}

%
%

\begin{prop}\label{semicontinuity}
(Upper semicontinuity of valuation) Let $f$ be a nonzero element of $K[x_1,
\ldots, x_n]$ and let $v = (v_1, \ldots, v_n) \in \N^n$.  Then the set 
$\{ \gamma \in K^n; v_\gamma (f) \ge_{lex} v\} $ is an algebraic subset of  $K^n$.  
In particular, the Lazard valuation is upper semi-continuous (in Zariski topology for any 
field $K$ and in the classical topology for $K
= \mathbb{R}$ or $\mathbb{C}$).  
\end{prop}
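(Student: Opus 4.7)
The plan is to exhibit explicit polynomial equations in $\gamma$ whose simultaneous vanishing is equivalent to $v_\gamma(f) \ge_{lex} v$. First I would Taylor-expand $f$ about an indeterminate point $\gamma$: setting $y_i = x_i - \gamma_i$ and substituting $x_i = y_i + \gamma_i$ into $f$, one obtains an expansion
\[
f(x) = \sum_{\mathbf u \in \N^n} a_{\mathbf u}(\gamma)\,(x-\gamma)^{\mathbf u},
\]
where each $a_{\mathbf u}(\gamma) \in K[\gamma_1,\ldots,\gamma_n]$ is a polynomial in $\gamma$ (over characteristic zero these are the classical Taylor coefficients up to factorials, but the expansion itself is purely algebraic, obtained by applying the binomial formula to each monomial of $f$). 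By the very definition of Lazard valuation, $v_\gamma(f)$ is the $\le_{lex}$-minimum of the support $\{\mathbf u : a_{\mathbf u}(\gamma) \ne 0\}$, so $v_\gamma(f) \ge_{lex} v$ holds if and only if $a_{\mathbf u}(\gamma) = 0$ for every $\mathbf u <_{lex} v$.

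The next step is to reduce this to finitely many conditions. Since the change of variables $x = y + \gamma$ preserves (multi)degree, $a_{\mathbf u}(\gamma)$ is identically zero as a polynomial in $\gamma$ whenever $u_i > \deg_{x_i} f$ for some $i$. Hence the condition $v_\gamma(f) \ge_{lex} v$ is equivalent to the finite system
\[
\bigl\{\,a_{\mathbf u}(\gamma) = 0 \;:\; \mathbf u <_{lex} v,\ u_i \le \deg_{x_i} f \text{ for all } i\,\bigr\},
\]
whose common zero locus is by construction an algebraic subset of $K^n$. This proves the first assertion. The semi-continuity statement is then immediate: every algebraic subset of $K^n$ is Zariski-closed, and when $K = \R$ or $\C$ it is also closed in the classical topology, so $\{\gamma : v_\gamma(f) \ge_{lex} v\}$ is closed for every $v \in \N^n$, which by definition means that $\gamma \mapsto v_\gamma(f)$ is upper semi-continuous with respect to $\le_{lex}$.

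The only delicate point in this argument is the finiteness reduction, since the set $\{\mathbf u \in \N^n : \mathbf u <_{lex} v\}$ is typically infinite (for example if $v_1 \ge 1$ it contains all $(0, u_2, \ldots, u_n)$). I expect this to be the main obstacle to state cleanly, but it is resolved by noting that outside the finite box $\{\mathbf u : u_i \le \deg_{x_i} f\}$ the coefficient $a_{\mathbf u}(\gamma)$ vanishes identically in $\gamma$, so those infinitely many ``equations'' are trivial and contribute nothing to the cut-out locus.
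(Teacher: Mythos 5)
Your proof is correct and follows essentially the same route as the paper: expand $f$ about the moving point $\gamma$, note that each coefficient in that expansion is a polynomial in $\gamma$, and observe that $v_\gamma(f) \ge_{lex} v$ holds exactly when all the coefficients indexed below $v$ vanish. The only difference is that you spell out the finiteness reduction (only the coefficients inside the degree box can be nontrivial), a point the paper leaves implicit.
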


\begin{proof}
Denote $w = (w_1, \ldots, w_n)$, $\alpha= (\alpha_1, \ldots , \alpha _n)$.  The coefficient $c_{w, \alpha} $  in the expansion of $f$ at $\alpha$ 
\[f= \sum _w c_{w,\alpha} (x_1 - \alpha_1)^{w_1} \cdots (x_n - \alpha_n)^{w_n},\]
for $w$ fixed, is a polynomial in $\alpha$. (If $K$ is of characteristic zero then this coefficient 
equals  $\displaystyle {
\frac 1 {w_1! \cdots w_n!} \frac { \partial^{w_1+ \cdots + w_n}f}{\partial x_1^{w_1} \cdots \partial x_n^{w_n}}}$.)
The set $\{ \alpha \in K^n; v_\alpha (f) \ge_{lex} v\} $ is the intersection of the zero set of polynomials   
$c_{w, \alpha} $ for 
$w = (w_1, \ldots, w_n) <_{lex} v$.  Therefore it is algebraic.   
The algebraic sets are closed in Zariski topology by definition and clearly also, 
if $K= \mathbb{R}$ or $\mathbb{C}$, in the classical topology.  
\end{proof}

\begin{rem}\label{remarkanalytic}
Let $f : U \rightarrow K$,  $K
= \mathbb{R}$ or $\mathbb{C}$, be analytic, where $U$ is an open connected subset of
$K^n$, and suppose that $f$ does not vanish identically.
Then the Lazard valuation $v_{\alpha}(f)$ for $\alpha \in U$ can be defined 
exactly as in Definition 2.4 
and it satisfies the upper semicontinutiy property for the classical topology.  But the Lazard 
valuation extended to rational or meromorphic functions does not satisfy the upper semicontinutiy, 
see \cite{McCallum_Hong:15} for a discussion.
\end{rem}

We shall say that $f$ is \emph{%
valuation-invariant} in a subset $S\subset K^n$ 
 if $v_\alpha(f)$ 
is constant as $\alpha$ varies in $S$.

\vspace{0.5cm} For the remaining properties we state we shall assume that 
$K = \mathbb{R}$ or $\mathbb{C}$.
(We will use Proposition \ref{valinvproduct} later but not the remaining
properties.)

\begin{prop}\label{valinvproduct}
Let $f$ and $g$ be analytic in $U \subset K^n$, with $U$ open and connected, 
and suppose neither $f$ nor $g$ vanishes identically. 
(In particular, $f$ and $g$ could be nonzero elements of $K[x_1, \ldots, x_n]$, with $U = K^n$.)
Let $S \subset U$ be connected. 
Then $fg$ is valuation-invariant in $S$ if and
only if both $f$ and $g$ are valuation-invariant in $S$.
\end{prop}

\begin{proof} This follows easily from Proposition \ref{semicontinuity}
and Remark \ref{remarkanalytic}.   See  for instance the proof of Lemma A.3 of \cite{McCallum:88}. 
\end{proof}


The next lemma is in a sense another
analogue of the familiar order, and is particular to the case $n = 2$.

\begin{lem}\label{twovariableslemma}
Let $f(x,y) \in K[x,y]$ be primitive of positive degree in $y$ and
squarefree. Then for all but a finite number of points $(\alpha, \beta) \in
K^2$ on the curve $f(x,y) = 0$ we have $v_{(\alpha,\beta)}(f) = (0,1)$.
\end{lem}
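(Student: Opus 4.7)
The plan is to compute $v_{(\alpha,\beta)}(f) = (v_1, v_2)$ at a point of the curve in two separate stages, exploiting primitivity for the first coordinate and squarefreeness for the second.

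For the first stage, I would observe that $v_1 = 0$ at every $(\alpha,\beta) \in K^2$. Indeed, by the definition of the Lazard valuation, $v_1 > 0$ is equivalent to $f(\alpha, y) \equiv 0$ in $K[y]$, which in turn forces $(x - \alpha)$ to divide each coefficient of $f$ viewed in $K[x][y]$, contradicting primitivity. Hence on the curve we have $v_{(\alpha,\beta)}(f) = (0, m)$ with $m = \ord_\beta f(\alpha, y) \geq 1$.

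For the second stage, $m = 1$ fails precisely when $(\partial f/\partial y)(\alpha, \beta) = 0$, so the exceptional set on the curve equals the set of common zeros of $f$ and $\partial f/\partial y$. I would show this set is finite by establishing that $f$ and $\partial f/\partial y$ are coprime in $K(x)[y]$: since $f$ is squarefree in the UFD $K[x,y]$, the irreducible factors of positive $y$-degree (which correspond via Gauss's lemma to the irreducibles in $K(x)[y]$) appear with multiplicity one, so $f$ is squarefree in $K(x)[y]$ as well, and in characteristic $0$ this forces $\gcd_{K(x)[y]}(f, \partial f/\partial y) = 1$. Consequently the $y$-resultant $r(x) := \mathrm{Res}_y(f, \partial f/\partial y) \in K[x]$ is a nonzero polynomial, hence has only finitely many roots. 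The $x$-coordinates of exceptional points lie in the zero set of $r(x)$ together with the zero set of the $y$-leading coefficient $a_d(x)$ of $f$, a finite set in total. Above each such $\alpha$ the fiber of the curve is finite, because $f(\alpha, y) \not\equiv 0$ by primitivity, so the exceptional set is finite.

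The main obstacle is the squarefreeness transfer from $K[x,y]$ to $K(x)[y]$, which requires the Gauss-lemma correspondence of factorizations; without primitivity the whole statement would already fail on the vertical lines $x = \alpha$ coming from any non-trivial content factor. The remaining ingredients — the primitivity observation, the resultant vanishing criterion, and the handling of $\alpha$-values where the $y$-degree of $f$ drops — are routine.
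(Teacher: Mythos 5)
Your proposal is correct and follows essentially the same route as the paper's proof: the exceptional points are exactly the common zeros of $f$ and $f_y$, their $x$-coordinates are confined to the finitely many roots of the nonzero resultant $\mathrm{res}_y(f,f_y)$ (nonvanishing by squarefreeness), and primitivity guarantees $f(\alpha,y)\not\equiv 0$, hence finite fibers. Your extra care in making the first coordinate of the valuation explicitly zero via primitivity, and in justifying $\mathrm{res}_y(f,f_y)\neq 0$ through Gauss's lemma in characteristic $0$, simply spells out steps the paper leaves implicit.
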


\begin{proof}
Denote by $R(x)$ the resultant $\mathrm{res}_y(f, f_y)$ of $f$ and $f_y$
with respect to $y$. Then $R(x) \neq 0$ since $f$ is assumed squarefree. Let 
$(\alpha, \beta) \in K^2$, suppose $f(\alpha, \beta) = 0$ and assume that $%
v_{(\alpha,\beta)}(f) \neq (0,1)$. Then $f_y(\alpha,\beta) = 0$. Hence $%
R(\alpha) = 0$. So $\alpha$ belongs to the set of roots of $R(x)$, a finite
set. Now $f(\alpha, \beta) = 0$ and $f(\alpha, y) \neq 0$, since $f$ is
assumed primitive. So $\beta$ belongs to the set of roots of $f(\alpha, y)$,
a finite set.
\end{proof}

Let us further consider the relationship between the concepts of order-invariance
and valuation-invariance for a subset $S$ of $K^n$. The concepts are the
same in case $n = 1$ because order and valuation are the same for this case.
For $n = 2$ order-invariance in $S$ does not imply valuation-invariance in $S
$. (For consider $S= \{(x,y)| x^2 + y^2 - 1=0\}$  in $K^2$. The order of 
$f(x,y) = x^2 + y^2 - 1$ at every point of $S$ is 1. The valuation of $f$ at
every point $(\alpha, \beta) \in S \, \, \text{except} \, (\pm 1,0)$ is $(0,1)$.
But $v_{(\pm 1,0)}(f) = (0,2)$.) However for $n = 2$
we can prove the following.

\begin{prop}\label{twovariablesprop}
Let $f \in K[x,y]$ be nonzero and $S \subset K^2$ be connected. If $f$ is
valuation-invariant in $S$ then $f$ is order-invariant in $S$.
\end{prop}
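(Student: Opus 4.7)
The plan is to reduce to the case that $f$ is irreducible, dispose of the purely univariate cases, and then apply Lemma~\ref{twovariableslemma} together with the connectedness of $S$.

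First I would factor $f = c\prod_i f_i^{e_i}$ into its distinct irreducible factors in $K[x,y]$. Iterating the preceding proposition (valuation-invariance of a product of two nonzero polynomials forces valuation-invariance of each factor), every $f_i^{e_i}$, and hence every $f_i$, is valuation-invariant on $S$. Since $\ord_{(\alpha,\beta)}(f) = \sum_i e_i\,\ord_{(\alpha,\beta)}(f_i)$, it suffices to show that each irreducible $f_i$ is order-invariant on $S$, so I may assume $f$ itself is irreducible.

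If $f \in K[x]$, then $v_{(\alpha,\beta)}(f) = (\ord_\alpha f, 0) = (\ord_{(\alpha,\beta)}(f), 0)$, so valuation-invariance is literally order-invariance; the case $f \in K[y]$ is symmetric. Otherwise $f$ is irreducible of positive degree in both variables, which automatically makes $f$ squarefree and primitive as a polynomial in $y$ over $K[x]$ (any common factor of the $y$-coefficients would provide a proper divisor of $f$, contradicting irreducibility). Lemma~\ref{twovariableslemma} then produces a finite set $E \subset K^2$ such that every point of $\{f=0\}\setminus E$ has valuation $(0,1)$.

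Now let $(v_1,v_2)$ be the common value of $v_{(\alpha,\beta)}(f)$ for $(\alpha,\beta)\in S$. If $(v_1,v_2) = (0,0)$, then $f$ does not vanish on $S$ and the order is identically $0$. If $(v_1,v_2) = (0,1)$, then at each point of $S$ one has $f(\alpha,\beta) = 0$ and $f_y(\alpha,\beta)\neq 0$, so $\ord_{(\alpha,\beta)}(f) = 1$ throughout. In all remaining cases $f$ vanishes on $S$ with a valuation different from the generic value $(0,1)$ on the curve, so by the lemma every point of $S$ belongs to the finite set $E$; since $S$ is connected in the classical topology of $K^2$ and $E$ is discrete, $S$ is a single point, on which order-invariance is trivial.

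The main subtlety I expect lies in the bookkeeping of the first step: one must check that the multiplicative property really does strip off each exponent $e_i$ so that the individual irreducible factor is valuation-invariant, and that an irreducible $f$ of positive degree in both variables satisfies every hypothesis required by Lemma~\ref{twovariableslemma}. Once those checks are in hand, the topological dichotomy coming from connectedness of $S$ versus discreteness of $E$ closes the proof essentially for free.
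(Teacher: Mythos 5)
Your proposal is correct and follows essentially the same route as the paper: factor $f$ into irreducibles, use the product proposition to push valuation-invariance down to each irreducible factor, and then use Lemma~\ref{twovariableslemma} together with connectedness of $S$ for the bivariate case. The only differences are presentational: you dispose of the univariate cases $f \in K[x]$ and $f \in K[y]$ directly by observing that the valuation literally encodes the order there, whereas the paper folds $f \in K[y]$ into the positive-$y$-degree case and handles $f \in K[x]$ via ``irreducible $\Rightarrow$ simple roots $\Rightarrow$ valuation $(1,0)$''; and you argue by cases on the constant valuation $(v_1,v_2)$ (concluding ``$S\subset E$ finite, hence $S$ is a point'' when it is neither $(0,0)$ nor $(0,1)$), while the paper argues the contrapositive (``$S$ not a singleton $\Rightarrow$ $S$ infinite $\Rightarrow$ the generic valuation $(0,1)$ occurs in $S$ $\Rightarrow$ it is attained everywhere''). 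These are logically equivalent, and your checks that an irreducible $f$ of positive $y$-degree is automatically squarefree and $y$-primitive, and that $v=(0,1)$ forces $f_y\neq 0$ and hence order $1$, are exactly the right ones.
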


\begin{proof}
Assume that $f$ is valuation-invariant in $S$. Write $f$ as a product of
irreducible elements $f_i$ of $K[x,y]$. By Proposition \ref{valinvproduct} each $f_i$ is
valuation-invariant in $S$. We shall show that each $f_i$ is order-invariant
in $S$. Take an arbitrary factor $f_i$. If the valuation of $f_i$ in $S$ is $%
(0,0)$ then the order of $f_i$ throughout $S$ is $0$, hence $f_i$ is
order-invariant in $S$. So we may assume that the valuation of $f_i$ is
nonzero in $S$, that is, that $S$ is contained in the curve $f_i(x,y) = 0$.
Suppose first that $f_i$ has positive degree in $y$. Now the conclusion is
immediate in case $S$ is a singleton, so assume that $S$ is not a singleton.
Since $S$ is connected, $S$ is an infinite set. By Lemma \ref{twovariableslemma} and
valuation-invariance of $f_i$ in $S$, we must have $v_{(\alpha,\beta)}(f_i)
= (0,1)$ for all $(\alpha,\beta) \in S$. Hence $f_i$ is order-invariant in $S
$ (since $\mathrm{ord} f_i = 1$ in $S$). Suppose instead that $f_i = f_i(x)$
has degree 0 in $y$. Since $f_i(x)$ is irreducible it has no multiple roots.
Therefore $v_{(\alpha,\beta)}(f_i) = (1,0)$ for all $(\alpha,\beta) \in S$.
Hence $f_i$ is order-invariant in $S$ (since $\mathrm{ord} f_i = 1$ in $S$).
The proof that $f_i$ is order-invariant in $S$ is finished and
we conclude that 
$f$ is order-invariant in $S.$ 
\end{proof}

However the following example indicates that Proposition
\ref{twovariablesprop} is not true for dimension greater than 2; that is,
valuation-invariance does not imply
order-invariance when $n > 2$. Let $f(x,y,z) = z^2 - x y$ and let $S$ be the 
$x$-axis in $\mathbb{R}^3$. Now $f$ is valuation-invariant in $S$, since the
valuation of $f$ at each point of $S$ is $(0,0,2)$. But $f$ is not
order-invariant in $S$, 
since $\mathrm{ord}_{(0,0,0)} f = 2$ and 
$\mathrm{ord }_{(\alpha,0,0)} f = 1$ for $\alpha \neq 0$.


\section{The Puiseux with parameter theorem}
We recall the classical Puiseux with parameter theorem
in the form given in \cite{Pawlucki:84}.  
This theorem is a special case of the Abhyankar-Jung theorem, 
see \cite{Abhyankar:55}, \cite{PR:12}, and hence can be traced back to \cite{Jung:08}.
Puiseux with parameter is closely related to certain algebraic results
of Zariski concerning equisingularity in codimension 1 over an
arbitrary algebraically closed field of characteristic 0
(Thm 7 of \cite{ZariskiI:65} and Thms 4.4 and 4.5 of \cite{ZariskiII:65}).
In the Appendix at the end of this paper 
we provide a short proof of this theorem for the reader's convenience.   

We use the following notation:
with $\varepsilon = (\varepsilon_1, \ldots, \varepsilon_k)$, 
$U_{ \varepsilon, r} =  U_{\varepsilon} \times U_r  $, where 
$U_\varepsilon = \{x = (x_1, \ldots, x_k) \in \C^k:  |x_i|< \varepsilon_i , \forall i\}$, 
$U_r= \{y\in \C: |y|<r\}$.   In this section ``analytic'' means ``complex analytic''.

\begin{thm} \label{PuiseuxTheorem}{\rm (Puiseux with parameter)}\\
Let 
\begin{align}\label{polynomial2}
f(x,y,z) = z^d+ \sum_{i =0}^{d-1}a_i(x,y) z^{i} ,
\end{align}
be a monic polynomial in $z$ with coefficients $a_i(x,y)$ analytic in $U_{ \varepsilon, r}$.   
Suppose that the discriminant of $f$ is of the form  $D_f (x,y) = y^m u(x,y)$ 
with analytic function $u$ non vanishing on  $U_{ \varepsilon, r} $.  
Then, there are  a positive integer $N$ (we may take $N=d!$) 
and analytic functions $\xi_i(x,t): U_{\varepsilon,r^{1/N}}\to \C $ such that 
$$f(x,t^N,z) = \prod _{i=1}^d  (z - \xi_i (x,t))$$
for all $(x,t) \in U_{\varepsilon, r^{1/N}}$.  

The roots $\xi_i (x,t)$ satisfy, moreover, the following properties.  
Firstly, for every $i\ne j$,   
$\xi_i-\xi_j$ equals a power of $t$ times a function 
nonvanishing on  $U_{ \varepsilon, r^{1/N}} $.  Secondly, 
if the coefficients   $a_i(x,y)$ of $f$ are (complexifications of) real analytic functions then 
  the set of functions $\xi_i (x,t)$ is complex conjugation invariant. 
\end{thm}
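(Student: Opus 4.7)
The plan is to reduce the theorem to the classical one-variable Puiseux theorem by means of a monodromy argument parametrized by $x$, extend the resulting analytic roots across the exceptional locus $\{t=0\}$ using Riemann's removable singularity theorem, and then read off the auxiliary properties from the discriminant identity.

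On the open set $U_\varepsilon\times(U_r\setminus\{0\})$ the discriminant $D_f$ is nonvanishing, so the implicit function theorem shows that $\{f=0\}$ defines an unramified degree-$d$ covering of this base. Because the polydisk $U_\varepsilon$ is simply connected, the fundamental group $\pi_1(U_\varepsilon\times(U_r\setminus\{0\}))\cong\mathbb{Z}$ is generated by a single loop around $\{y=0\}$, and the associated monodromy is one permutation $\tau\in S_d$, independent of $x$. Pulling back by the substitution $y=t^N$ replaces $\tau$ by $\tau^N$; taking $N=d!$ ensures $\tau^N=\mathrm{id}$, since the order of any element of $S_d$ divides $d!$. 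Hence on $U_\varepsilon\times(U_{r^{1/N}}\setminus\{0\})$ the cover splits, giving $d$ single-valued analytic roots $\xi_1,\ldots,\xi_d$. Each $\xi_i$ is locally bounded near $\{t=0\}$, because it satisfies a monic polynomial equation whose coefficients are bounded on compact subsets of $U_{\varepsilon,r}$; so Riemann's removable singularity theorem extends each $\xi_i$ analytically across $\{t=0\}$, and the factorization $f(x,t^N,z)=\prod_i(z-\xi_i(x,t))$ holds on all of $U_{\varepsilon,r^{1/N}}$ by continuity.

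For the properties of the differences: each $h_{ij}:=\xi_i-\xi_j$ is nonvanishing off $\{t=0\}$, so for each fixed $x$ the function $h_{ij}(x,t)$ vanishes to some order $k_{ij}(x)\ge 1$ in $t$. The identity
\[
\prod_{i<j} h_{ij}(x,t)^2 \;=\; D_f(x,t^N) \;=\; t^{Nm}\,u(x,t^N),
\]
with $u$ nowhere vanishing, forces $\sum_{i<j}2k_{ij}(x)=Nm$ for all $x$. The order of vanishing of an analytic function is upper semicontinuous in the parameters, and a family of upper semicontinuous integer-valued functions with constant sum on a connected base must be individually constant; hence each $k_{ij}$ is a constant and $h_{ij}=t^{k_{ij}}g_{ij}$ with $g_{ij}$ nonvanishing on $U_{\varepsilon,r^{1/N}}$. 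Finally, if the $a_i$ are (complexifications of) real analytic functions, the Schwarz reflection principle gives that $\overline{\xi_i(\overline x,\overline t)}$ is again a root of $f(x,t^N,z)$, and so coincides with some $\xi_j(x,t)$; this yields the claimed complex-conjugation invariance of the set $\{\xi_i\}$.

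The main obstacle will be the parametrized monodromy step: one must carefully justify that the permutation $\tau$ can be taken independent of $x$ (using simple connectedness of $U_\varepsilon$ and the product structure of the fundamental group) and that $N=d!$ is precisely the exponent needed to trivialize it. Once this is in place, the extension across $\{t=0\}$, the constancy of the exponents $k_{ij}$, and the conjugation statement all follow from standard complex-analytic tools.
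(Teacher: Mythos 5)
Your proof is correct and follows essentially the same route as the paper's appendix: you trivialize the monodromy of the root covering around $\{y=0\}$ by the base change $y=t^{N}$ with $N=d!$ (the paper realizes exactly this via the universal cover $y=e^{2\pi i w}$ and the periodicity $\tilde\xi_i(x,w+d!)=\tilde\xi_i(x,w)$), and then extend the bounded roots across $\{t=0\}$ by the parametrized Riemann removable singularity theorem. You in addition supply arguments for the two ``moreover'' clauses, which the paper's appendix does not spell out; the only slip is the harmless claim $k_{ij}(x)\ge 1$, which should be $k_{ij}(x)\ge 0$ since two roots may remain distinct at $t=0$, and the rest of your semicontinuity-plus-constant-sum argument goes through unchanged.
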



Note that ``parameter'' in the name ``Puiseux with parameter'' refers to the
$k$-tuple $x$, which ``parametrizes'' the Puiseux roots $\xi_i(x,t)$.
Observe also that Theorem \ref{PuiseuxTheorem} is applicable to any nonmonic
polynomial $f(x,y,z)$ which otherwise satisfies the hypotheses of the theorem,
and for which the leading coefficient $a_d(x,y)$ vanishes nowhere
in $U_{\varepsilon,r}$. For we may simply divide $f$ by $a_d$
to obtain a monic polynomial with the same roots which still satisfies the
hypotheses of the theorem.

By analogy with \cite{McCallum_Hong:16} we now consider the more general case in which
\begin{align}\label{polynomial3}
f(x,y,z) = a_d(x,y) z^d+ \sum_{i =0}^{d-1}a_i(x,y) z^{i} , 
\end{align}
with coefficients $a_i(x,y)$ analytic in $U_{\varepsilon,r}$,  
under the assumption that each of $a_d (x,y)$ and $D_f (x,y)$ 
is equal to a power of $y$ times a unit, that is, an analytic function
nonvanishing in $U_{\varepsilon,r}$.
We may then apply Theorem \ref{PuiseuxTheorem} to $\tilde f(x,y,\tilde z)$ defined by 
\begin{align}\label{tildeF}
\tilde f(x,y,\tilde z) = \tilde z^d+ \sum_{i =0}^{d-1}a_i a_d^{d-1-i} \tilde z^{i} 
\end{align}
because $D_{\tilde f}  = a_d^{d^2 -3d+2} D_f$ 
satisfies the assumptions of Theorem \ref{PuiseuxTheorem}.  
Let $ \eta_i(x,t)$  be the roots of $\tilde f(x,t^N, \tilde z)$.  Then, with $y = t^N$,
\begin{align}\label{relations}
  \prod _{i=1}^d  (a_d z-   \eta_i )  =  \tilde f (x,y, a_d z)  
 = a_d ^{d-1} f(x,y,z) . 
  \end{align} 
The above relation is key for proving the following result, which is
a complex analytic version of Corollary 3.15 of \cite{McCallum_Hong:16}.

\begin{cor}\label{Puiseuxfinal}
Let $f(x,y,z) $ be as in \eqref{polynomial3} and 
suppose that each of $a_d (x,y)$, $a_0(x,y)$ and $D_f (x,y)$ 
is of the form a power of $y$ times an 
analytic function nonvanishing on $U_{\varepsilon,r}$.
Then there are an integer $N > 0$,  analytic functions 
$u_i(x,t): U_{\varepsilon,r^{1/N}}\to \C $, $1 \le i \le d$ and  
non-negative integers $m_0,m_1, \ldots, m_d$, such that 
on $U_{\varepsilon, r^{1/N}}$,
$$t^{dm_0} f(x,t^N,z) = a_d(x,t^N) \prod _{i=1}^d  (t^{m_0} z - t^{m _i} u_i (x,t)).$$


The functions $u_i (x,t)$ satisfy, moreover, the following properties. 
For all $i$, $u_i$ is nonvanishing in $U_{\varepsilon, r^{1/N}}$.
For every $i \ne j$, $t^{m_i} u_i- t^{m_j} u_j$ 
equals a power of $t$ times a function nonvanishing on  $U_{ \varepsilon, r^{1/N}} $.    
If the coefficients of  $a_i(x,y)$ of $f$ are (complexifications of) 
real analytic functions then 
the set of functions $u_i (x,t)$ is complex conjugation  invariant. 
\end{cor}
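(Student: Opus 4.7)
The plan is to derive Corollary \ref{Puiseuxfinal} from Theorem \ref{PuiseuxTheorem} by applying the latter to the monic polynomial $\tilde f$ from \eqref{tildeF} and then transporting the factorization of $\tilde f$ back to $f$ via the identity \eqref{relations}. The hypothesis of Theorem \ref{PuiseuxTheorem} is satisfied for $\tilde f$ because $D_{\tilde f} = a_d^{d^2 - 3d + 2} D_f$ is a power of $y$ times a unit, given that both $a_d$ and $D_f$ are. This produces a positive integer $N$ (one may take $N = d!$) and analytic roots $\eta_i(x, t)$ of $\tilde f(x, t^N, \tilde z)$ on $U_{\varepsilon, r^{1/N}}$ satisfying the two auxiliary properties listed in Theorem \ref{PuiseuxTheorem}.

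The key intermediate step, and the main obstacle, is to show that each individual $\eta_i$ factors as $t^{m_i}$ times a nowhere-vanishing analytic function. Theorem \ref{PuiseuxTheorem} supplies this only for pairwise differences $\eta_i - \eta_j$, not for the $\eta_i$ themselves. To bridge the gap I would evaluate $\tilde f$ at $\tilde z = 0$, which by the factorization gives $\prod_i \eta_i = (-1)^d a_0(x, t^N)\, a_d(x, t^N)^{d-1}$; by hypothesis on $a_0$ and $a_d$ this is a power of $t$ times a unit on $U_{\varepsilon, r^{1/N}}$. Expand each $\eta_i$ via its Taylor series in $t$ as $\eta_i = t^{m_i} g_i$, where $m_i$ is the generic order in $t$ (so $g_i(x, 0) \not\equiv 0$). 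The product $\prod g_i$ then equals a unit, and since a nowhere-vanishing analytic product forces each factor to be nowhere vanishing, each $g_i$ is itself a unit.

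With that structural fact in hand, the desired identity should come out of direct algebra. Set $m_0 := N \cdot \ord_y a_d$, so that $a_d(x, t^N) = t^{m_0} v(x, t)$ for some unit $v$, and define $u_i := g_i / v$. Substituting $\eta_i = t^{m_i} g_i$ and $y = t^N$ into \eqref{relations}, pulling $v$ out of each of the $d$ linear factors on the left-hand side, and cancelling a factor of $v^{d-1}$ against $a_d^{d-1}$ on the right-hand side yields exactly
\[
t^{d m_0}\, f(x, t^N, z) \;=\; a_d(x, t^N) \prod_{i=1}^{d} \bigl(t^{m_0} z - t^{m_i} u_i(x, t)\bigr).
\]

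The three remaining assertions then follow formally. Each $u_i$ has order $0$ in $t$ because $g_i(x, 0) \neq 0$ and $v$ is a unit. For $i \neq j$, $t^{m_i} u_i - t^{m_j} u_j = v^{-1}(\eta_i - \eta_j)$ is a power of $t$ times a unit, because this is true of $\eta_i - \eta_j$ by Theorem \ref{PuiseuxTheorem} and multiplication by the unit $v^{-1}$ preserves this form. Finally, in the real-analytic case the unit part $v$ of the real function $a_d(x, t^N)$ is itself real, so complex conjugation acts on the $u_i$ exactly as it acts on the $\eta_i$ (preserving the $t$-orders $m_i$); hence the set $\{u_i\}$ is conjugation invariant, completing the plan.
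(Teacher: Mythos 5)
Your proposal is correct and follows essentially the same route as the paper's own proof: apply Theorem \ref{PuiseuxTheorem} to $\tilde f$, use $\prod_i\eta_i = \pm a_0 a_d^{d-1}$ to deduce that each $\eta_i$ is a power of $t$ times a unit, then normalize by the unit part of $a_d$ and substitute into \eqref{relations}. You fill in the ``product of units is a unit, hence each factor is'' step a bit more explicitly than the paper does, but the decomposition, the definition of the $u_i$, and the treatment of the difference and conjugation properties all coincide with the paper's argument.
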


\begin{proof}
By  the assumption $a_d(x,t^N)=  t^{m_0} \tilde a_d(x,t^N) $ and 
$a_0(x,t^N)=  t^{m} \tilde a_0(x,t^N) $, with $\tilde a_d$, $\tilde a_0$ nowhere 
vanishing.  
Since 
$$\prod _i \eta_i (x,t^N) = a^{d-1} _d(x,t^N) a_0(x,t^N), $$ 
the same is true for each $\eta_i $ (the roots of  $\tilde f$), namely,
$\eta_i (x,t)= t^{m_i} \tilde \eta_i (x,t)$, with $ \tilde \eta_i (x,t)$ nowhere vanishing.  
By \eqref{relations} we get 
$ \tilde a_d^d \prod _{i=1}^d  (t^{m_0} z-   \tilde a^{-1}_d\eta_i )  =  
\tilde a_d^d \prod _{i=1}^d  (t^{m_0} z-   t^{m_i}\tilde a^{-1}_d\tilde\eta_i ) = 
a_d ^{d-1} f(x,y,z) $. 
Hence, setting $u_i : = \tilde a^{-1}_d(x,t^N)\tilde \eta_i (x,t)$  we obtain the required 
formula for $t^{dm_0}  f(x,t^N,z) $.   If the coefficients  $a_i$ are real then the set of roots $\eta_i$ is conjugation invariant and hence so is the set of functions $u_i (x,t)$
\end{proof}

\begin{rem}
By analogy with Corollary 3.15 of \cite{McCallum_Hong:16}, just the hypotheses
on the leading coefficient $a_d(x,y)$ and discriminant $D_f(x,y)$ of
a nonmonic polynomial $f(x,y,z)$ suffice
to yield a conclusion slightly weaker than that presented in Corollary 
\ref{Puiseuxfinal} above.
In particular, 
the existence of integer $N>0$, analytic functions $u_i(x,t)$ and nonnegative
integers $m_i$ yielding a factorization of
$t^{dm_0}f(x,t^N,z)$ follow from just these hypotheses.
Formally, the roots $t^{m_i - m_0} u_i(x,t)$ of $f(x,t^N,z)$ are meromorphic
(Laurent) series: this means that finitely many negative exponents in $t$,
as well as $u_i \equiv 0$, are allowed.
Hence some roots could go to infinity as $t \rightarrow 0$,
where such behaviour in general depends on the parameter $x$.
(With $k = 1$, consider the example $f(x,y,z) = yz - x$.)
The additional hypothesis of Corollary \ref{Puiseuxfinal} above
is that the trailing coefficient $a_0(x,y)$ is of the same special form
as $a_d(x,y)$ and $D_f(x,y)$.
This hypothesis on $a_0(x,y)$ is key to proving the \emph{nonvanishing} of
the functions $u_i(x,t)$ in $U_{\varepsilon,r^{1/N}}$, which is not guaranteed
without it (as the example shows).
The nonvanishing of the $u_i(x,t)$ implies, in turn, that the number of roots
which tend to infinity as $t \rightarrow 0$ is independent of $x$.
This consequence is crucial to part of the proof of our main theorem
presented in the next section.
\end{rem}

As follows from the next lemma 
the assumptions of Theorem \ref{PuiseuxTheorem} and Corollary \ref{Puiseuxfinal} 
can be expressed equivalently as the order invariance of  
$a_d (x,y)$, $a_0(x,y)$ and $D_f (x,y)$ in the hyperplane $y=0$.

\begin{lem}\label{alonghypersurface}
Recall that $x$ denotes $(x_1, \ldots, x_{k})$ in this section. 
Let $g(x,y)$ be analytic in a neighbourhood of the origin in $K^{k+1}$, $K=\R$ or $\C$, 
and suppose that $g$ does not vanish identically.   The following are equivalent:
\begin{enumerate}
\item The order of  $g(a, y)$ (as a function of $y$) 
      at $y=0$ equals $m$ for all fixed $a$ sufficiently small.
\item For some function $u$ analytic near the origin 
      with $u(0, 0) \neq 0$ we have
      $g(x, y) = y^m u(x, y)$ for all $(x, y)$ sufficiently small.
\item  $g$ is order-invariant in the hyperplane $y= 0$ near the origin and this order is equal to $m$. 
\end{enumerate}
\end{lem}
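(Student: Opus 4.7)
The plan is to establish the cycle $(2) \Rightarrow (1)$, $(2) \Rightarrow (3)$, $(1) \Rightarrow (2)$, and $(3) \Rightarrow (2)$, with all four arguments organised around the expansion of $g$ as a convergent power series in $y$ with coefficients analytic in $x$,
\[
g(x, y) \;=\; \sum_{k \geq 0} g_k(x) \, y^k .
\]
The pivotal observation is that, for every fixed small $a$, the value $g_k(a)$ is precisely the coefficient of the pure monomial $y^k$ (no factor of $(x-a)$) in the Taylor expansion of $g$ around $(a, 0)$ in powers of $(x-a, y)$.

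The two implications leaving (2) are essentially immediate. Assuming $g(x, y) = y^m u(x, y)$ with $u(0,0) \neq 0$, continuity of $u$ gives $u(a, 0) \neq 0$ for all $a$ sufficiently small; then $g(a, y) = y^m u(a, y)$ has order exactly $m$ in $y$, which is (1), and expanding $g$ around $(a, 0)$ yields $g = u(a, 0)\, y^m + \text{(terms of higher total degree in } (x-a, y)\text{)}$, so the order at $(a, 0)$ is $m$, which is (3).

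For $(1) \Rightarrow (2)$, hypothesis (1) forces $g_k(a) = 0$ for $k < m$ and $g_m(a) \neq 0$ for all small $a$. Analyticity then gives $g_k \equiv 0$ near the origin for every $k < m$, while $g_m(0) \neq 0$. Setting $u(x, y) := \sum_{k \geq 0} g_{m+k}(x) \, y^k$ produces $g = y^m u$ with $u(0,0) = g_m(0) \neq 0$, as required.

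The main step is $(3) \Rightarrow (2)$. Order-invariance at $(a, 0)$ with common value $m$ forces in particular every Taylor coefficient of $g$ at $(a, 0)$ of total degree less than $m$ to vanish; applied to the pure $y$-monomials this reads $g_k(a) = 0$ for $k < m$ and all small $a$, hence $g_k \equiv 0$ for $k < m$ by analyticity. Factor $g = y^m h$ with $h(x, y) = \sum_{k \geq 0} g_{m+k}(x) \, y^k$; then the order of $g$ at $(a, 0)$ equals $m$ plus the order of $h$ at $(a, 0)$, so the latter must be $0$, i.e.\ $h(a, 0) = g_m(a) \neq 0$ for all small $a$, and in particular $h(0, 0) = g_m(0) \neq 0$. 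This establishes (2). The principal obstacle is exactly this last implication: the vanishing of $g_k$ for $k < m$ is only half of what is needed, and one must reapply the order-invariance hypothesis to the cofactor $h$ to secure the non-vanishing of $u$ at the origin.
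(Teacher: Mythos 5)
Your proof is correct and follows the same approach as the paper, namely the iterated power series expansion $g(x,y)=\sum_{k\geq 0}g_k(x)\,y^k$ together with the identity of $g_k(a)$ with the pure $y^k$ Taylor coefficient at $(a,0)$. The paper proves only $(1)\Rightarrow(2)$ in detail and declares $(3)\Rightarrow(2)$ ``easily adapted'' and $(2)\Rightarrow(1),(3)$ ``straightforward''; you supply exactly those details, including the key point that $(3)\Rightarrow(2)$ requires a second application of order-invariance to the cofactor $h$ to secure $h(0,0)\neq 0$.
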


\begin{proof}
First we show that (1) implies (2). Suppose that $g(\alpha, y)$ is of order $m$ at $y=0$ for $\alpha$ 
within box $B$ about the origin.  
Expand $g$ about the origin as the following iterated series:
\[g(x, y) = g_0(x) + g_1(x) y + g_2(x) y^2 + \cdots .\]
Take $(a, 0) \in B$. By assumption $g_{i}(a) = 0$ for all $i < m$ and 
 $g_{m}(a) \neq 0$. Setting
\[u(x, t) = g_{m}(x) + g_{m+1}(x) y + g_{m+2}(x) y^ 2 + \cdots ,\]
we have $g(x, y) = y^{m} u(x, y)$, 
and $u(0, 0) \neq 0$,
as required.

Our proof above that (1) implies (2) could be easily adapted 
to show that (3) implies (2). 
That (2) implies (1) and (3) is straightforward. 
\end{proof}

\begin{rem}
One can adapt easily the above proof to show that the conditions (1)-(3)
also are equivalent to \emph{
\begin{enumerate}
\item [(4)]g is valuation-invariant in the hyperplane y = 0 near the origin,
and this valuation is equal to (0,m) (where 0 denotes n-1 zeros).
\end{enumerate}}
We do not use this result in this paper.
\end{rem}



\section{Proof of Lazard's main claim}
We shall need to sharpen slightly the definition of Lazard delineability
given in Section 2. 
First recall that a \emph{k-dimensional analytic submanifold}
of $\mathbb{R}^{n-1}$ is a nonempty subset $S$ which looks locally like
$\mathbb{R}^k$. That is, for each point $p$ of $S$, there is an analytic
coordinate system $(\hat{x}_1, \ldots, \hat{x}_{n-1})$ about $p$ such that
near $p$, $S$ is the intersection of the $n-1-k$ hyperplanes
$\hat{x}_{k+1} = 0, \ldots, \hat{x}_{n-1} = 0$ in the local coordinate
system \cite{McCallum:88, McCallum:98}. With this notation, we say that
$(\hat{x}_1, \ldots, \hat{x}_{k})$ are \emph{local coordinates on S near p}.
A function $\theta : S \rightarrow \mathbb{R}$ is said to be {\em analytic} if near every
point $p$ of $S$, there are local coordinates on $S$ with respect to
which $f$ is analytic (near the origin in $\mathbb{R}^k$).
We say that an $n$-variate real polynomial $f$ 
is {\em Lazard analytic delineable}
on a submanifold $S$ of $\mathbb{R}^{n-1}$ 
if conditions (1), (2) and (3) of Definition \ref{delineability} are satisfied, 
where the continuous functions
$\theta_1 < \ldots < \theta_k$ from $S$ to $\mathbb{R}$ are moreover analytic.
The major effort of this section is to prove the following result; the Lazard claim is then an easy consequence.

\begin{thm}\label{mainthm}
Let $f(x, x_n) \in \mathbb{R}[x, x_n]$ 
have positive degree $d$ in $x_n$, where $x = (x_1, \ldots, x_{n-1})$.
Let $D(x)$, $l(x)$ and $t(x)$ denote the discriminant, leading coefficient
and trailing coefficient (that is, the coefficient independent of $x_n$) of $f$,
respectively, and suppose that each of these polynomials is nonzero
(as an element of $\R[x]$).
Let $S$ be a connected analytic submanifold of $\mathbb{R}^{n-1}$
in which $D$, $l$ and $t$ are all valuation-invariant.
Then $f$ is Lazard analytic delineable on $S$,
hence $f$ is valuation-invariant in every Lazard section and sector over $S$.
Moreover, the same conclusion holds for the polynomial 
$f^*(x, x_n) = x_nf(x, x_n)$.
\end{thm}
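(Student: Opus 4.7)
The plan is to reduce, via a local parametrization of $S$ combined with a well-chosen monomial substitution in the directions transverse to $S$, to the setting of Corollary \ref{Puiseuxfinal}. I work locally near an arbitrary point $\alpha_0 \in S$ and complexify throughout, appealing at the end to the complex-conjugation invariance clause of Corollary \ref{Puiseuxfinal} to recover the real analytic sections.

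First I would choose, near $\alpha_0$, an analytic parametrization $\phi \colon V \to S$ with $V$ open in $\mathbb{R}^{\dim S}$ and $\phi(0) = \alpha_0$, and extend it to an analytic chart $\Phi \colon V \times W \to U$ onto a neighborhood of $\alpha_0$ in $\mathbb{R}^{n-1}$, with $\Phi(v, 0) = \phi(v)$ and $W$ a neighborhood of the origin in $\mathbb{R}^{n-1-\dim S}$ transverse to $S$. Next I would replace the transverse variables by a monomial test curve $w = \sigma(\tau) = (\tau^{N_1}, \ldots, \tau^{N_{n-1-\dim S}})$, the exponents chosen large and with $N_1 \gg N_2 \gg \cdots$ so that the order in $\tau$ of the pullback by $\sigma$ of any analytic function is governed lexicographically by its Lazard valuation at $\alpha_0$. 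Setting $\Psi(v, \tau) = \Phi(v, \sigma(\tau))$ and $F(v, \tau, z) = f(\Psi(v, \tau), z)$, the polynomial $F$ has degree $d$ in $z$ with analytic coefficients in $(v, \tau)$, and its discriminant, leading coefficient and trailing coefficient in $z$ are, up to irrelevant units, the pullbacks under $\Psi$ of $D$, $l$ and the trailing coefficient of $f$.

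The heart of the proof is to verify that each of these three pullbacks has the form $\tau^m$ times an analytic function nowhere vanishing on the slice $\tau = 0$. This is where the hypothesis is used: valuation invariance of $D$ on $S$, combined with the choice of the $N_i$, forces the order in $\tau$ of $D \circ \Psi$ to be constant in $v$ and strictly positive exactly on $\{\tau = 0\}$, and likewise for the pullbacks of $l$ and the trailing coefficient. Applying Lemma \ref{alonghypersurface} then gives the required power-of-$\tau$-times-unit form, so Corollary \ref{Puiseuxfinal} applies and produces an integer $N$, nonnegative integers $m_0, \ldots, m_d$, and analytic roots $u_i(v, s)$ in $s = \tau^{1/N}$ together with a product factorization of $F(v, s^N, z)$ whose factors are, up to explicit powers of $s$, linear in $z$ with each $u_i$ nowhere vanishing, the set $\{u_i\}$ complex-conjugation invariant, and any two $s^{m_i} u_i$ and $s^{m_j} u_j$ differing by a power of $s$ times a nowhere vanishing function.

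From this factorization the Lazard analytic delineation reads off directly: the real roots of $f_{\phi(v)}(z)$, which coincide (up to the $s$-factors dictated by $m_0$ and $m_i$) with the real values among the $u_i(v, 0)$, organize into finitely many real analytic functions $\theta_1 < \cdots < \theta_k$ on $\phi(V)$; their pairwise disjointness is the content of the non-coincidence clause of Corollary \ref{Puiseuxfinal}; and their multiplicities are uniform in $v$ because the factorization is. Valuation invariance of $f$ on each Lazard section and sector then follows by transporting the product factorization back to the original coordinates and invoking the basic properties of valuation invariance developed in Section 3: each linear factor $z - \theta_i(x)$ is valuation invariant on its section (where it vanishes with the constant multiplicity $m_i$) and on the adjacent sectors (where it is nowhere zero), and $l$ is valuation invariant on $S$ by hypothesis, so $f$ inherits the invariance. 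The principal obstacle is the careful bookkeeping in paragraph two: choosing the exponents $N_i$ so that the Lazard valuations of the three distinguished coefficients at $\alpha_0$ (lexicographically minimal multi-indices in the original coordinates) really do control the orders in $\tau$ of their pullbacks along $\Psi$, and so that valuation invariance on $S$ translates faithfully into $v$-uniformity of these orders.
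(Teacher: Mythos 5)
The central gap is in your second paragraph. You straighten $S$ by an analytic chart $\Phi$ and then substitute a monomial curve only in the chart coordinates transverse to $S$. But the Lazard valuation is defined with respect to the original coordinates $x_1,\ldots,x_{n-1}$ and is not preserved by analytic changes of coordinates, so the hypothesis that $D$, $l$ and $t$ are valuation-invariant on $S$ says nothing direct about the order in $\tau$ of $D(\Phi(v,\sigma(\tau)))$. The tool that converts a Lazard valuation into an order along a curve, Corollary \ref{order}, requires the curve to have exactly the form $p+(s^{c_1},\ldots,s^{c_{n-1}})$ in the original coordinates, with $(c_1,\ldots,c_{n-1})$ an evaluator for the whole finite set of valuations $\{v_p(D\,l\,t):p\in S\}$; your curve $\Psi(v,\tau)=\Phi(v,\sigma(\tau))$ is not of this form (its components in the original coordinates are general analytic functions of $\tau$, mixed by the chart), and choosing $N_1\gg N_2\gg\cdots$ among the transverse variables bears no relation to the lexicographic order underlying the valuation. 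This is exactly the step you yourself flag as ``the principal obstacle'', and it is not mere bookkeeping: the paper's proof avoids it by basing the test curve at the varying point $p\in S$ itself, $\psi(p,s)=p+(s^{c_1},\ldots,s^{c_{n-1}})$, moving in all original coordinate directions, and using local coordinates on $S$ only to apply Lemma \ref{alonghypersurface} in the parameter $p$. (Note also that when $\dim S=n-1$ your construction has no transverse variables at all.)

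Two further steps do not go through as sketched. First, since $l$ and $t$ may vanish identically on $S$, Corollary \ref{Puiseuxfinal} produces roots $s^{m_i-m_0}u_i(p,s)$ with exponents of either sign: roots with negative exponent escape to infinity and yield no sections, those with positive exponent collapse onto the extra section $x_n=0$, and the Lazard evaluation $f_p$ then has degree strictly smaller than $d$. Your phrase that the real roots of $f_{\phi(v)}$ ``coincide (up to the $s$-factors dictated by $m_0$ and $m_i$) with the real values among the $u_i(v,0)$'' skips this trichotomy, which in the paper is handled by the index sets $I_-$, $I_0$, $I_+$ and the auxiliary polynomial $h(p,z)$ of \eqref{h}. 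Second, valuation-invariance of $f$ in the sections and sectors cannot be obtained by ``transporting the product factorization back to the original coordinates'' and invoking the product property of Section 3: the factorization exists only after the substitution $y=s^N$ along the test curve, its factors are neither polynomials nor functions on a neighbourhood in $\mathbb{R}^n$, and $v_{(p,z_0)}(f)\in\mathbb{N}^n$ depends on the behaviour of $f$ off $S\times\mathbb{R}$, which the restricted factorization does not see. The paper needs a second test curve, $(p+(s^{2c_1},\ldots,s^{2c_{n-1}}),\,z_0+s)$ with $(c_1,\ldots,c_{n-1},1)$ an evaluator for the valuations of $f$ on $S\times\mathbb{R}$, together with the order count \eqref{orderf}, to read off $v_{(p,z_0)}(f)$ on each section and sector; nothing in your proposal plays this role.
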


A special form of Lazard's main claim, in which $S$ is assumed to be
a connected submanifold of $\mathbb{R}^{n-1}$, 
and each element of $A$, an irreducible basis,  is concluded to be
Lazard analytic delineable on $S$ etc., follows from the above theorem
by Proposition 3.4. 
The details are provided in Subsection 5.3 below.
This special form of Lazard's main claim is sufficient to validate
Lazard's CAD method, as outlined in Section 2.

Our task now is to prove Theorem 5.1. To do this
we first need to investigate transforming the valuation of $f$ at a point $p$
into the order of $f$ along a curve passing through $p$.

\subsection{Transforming the valuation}
For a pair of vectors $\mathbf c,  \mathbf v$ we denote by $\prodscal {\mathbf c} {\mathbf v}$
 their scalar product  $\prodscal {\mathbf c} {\mathbf v} = \sum _{i} c_i v_i$.  Let 
 $V\subset \N^ n$ be a non-empty family. 
We say that an $n$-tuple of positive integers  
 $\mathbf c= (c_1, \ldots , c_n)\in (\N^*) ^ n$ is an
\emph{evaluator for} $V$ if for every  
$i= 1, ...,n-1$ we have
\begin{align}\label{conditiononci}
c_i \ge 1+  \max_{\mathbf v\in V} \sum _{j>i} c_j v_{j}  .
\end{align}
The above inequality is intended to mean that the subset
$\{ \sum_{j>i} c_jv_j | \mathbf v \in V\}$ of $\N$ is finite and, if so, $c_i$ strictly exceeds
the maximum element of this set.
Such a $\mathbf c$ always exists for any given 
finite  $V\subset \N^ n$, but may or may not exist if $V$ is infinite. 
Indeed, if $V$ is finite, we may choose $c_n$ arbitrarily and then use \eqref{conditiononci} to define 
$c_{n-1}, c_{n-2}, \ldots $ recursively. 
This is enough for us because we shall typically need an evaluator for a set $V$
of valuations of some $n$-variate polynomial $g$.  Then 
$\{v_\alpha(g)| \alpha \in K^n\}$ is finite, in fact every i-th  component $(v_\alpha(g))_i$ 
of $v_\alpha(g)$ is  bounded by the maximal exponent of $x_i^{\beta_i}$ that  appears in $g$.

\begin{rem}\label{herval}
If  $\mathbf c= (c_1, \ldots , c_n)$ is an evaluator for $V$ then 
${\mathbf c'} = (c_1, \ldots  c_{n-1})$ is an evaluator for 
$V' = \{(v_1,\ldots , v_{n-1})| \exists_{ v_n}  (v_1,\ldots , v_n)\in V\}$.
\end{rem}

\begin{lem}\label{testexponents}
Let $V\subset \N^ n$ be a non-empty family 
and let  $\mathbf c= (c_1, ..., c_n)\in (\N^*) ^ n$ be an
evaluator for $V$.  

If $\mathbf v \in V$, $\mathbf u \in \N^ n$, and 
$\mathbf v <_{lex} \mathbf u$ for the lexicographic order, 
then  $\prodscal {\mathbf c} {\mathbf v} < \prodscal {\mathbf c} {\mathbf u}$.  
In particular, if $\mathbf v, \mathbf u \in V$ 
then $\prodscal {\mathbf c} {\mathbf v} = 
\prodscal {\mathbf c} {\mathbf u}$ if and only if $\mathbf v = \mathbf u $. 
\end{lem}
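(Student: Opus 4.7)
The plan is to argue directly from the definition of an evaluator by writing $\langle \mathbf{c},\mathbf{u}\rangle - \langle \mathbf{c},\mathbf{v}\rangle$ as a sum that splits at the first coordinate where $\mathbf{u}$ and $\mathbf{v}$ differ, and then bounding below using condition \eqref{conditiononci}.

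First I would fix $\mathbf{v}\in V$ and $\mathbf{u}\in\N^n$ with $\mathbf{v}<_{lex}\mathbf{u}$, and let $i$ be the smallest index with $v_i\ne u_i$; by the definition of lexicographic order, $u_j=v_j$ for $j<i$ and $u_i\ge v_i+1$. Then
\[
\prodscal{\mathbf{c}}{\mathbf{u}}-\prodscal{\mathbf{c}}{\mathbf{v}} \;=\; \sum_{j=i}^{n} c_j(u_j-v_j) \;\ge\; c_i \;+\; \sum_{j>i} c_j(u_j-v_j) \;\ge\; c_i - \sum_{j>i} c_j v_j,
\]
using $u_i-v_i\ge 1$ and $u_j\ge 0$ in the last bound. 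If $i<n$, the evaluator condition \eqref{conditiononci} for this $i$ applied to $\mathbf{v}\in V$ gives $c_i\ge 1+\sum_{j>i}c_jv_j$, so the right-hand side is $\ge 1$. If $i=n$, the second sum is empty and the estimate reduces to $c_n\ge 1$, which holds since $c_n\in\N^*$. Either way, $\prodscal{\mathbf{c}}{\mathbf{v}}<\prodscal{\mathbf{c}}{\mathbf{u}}$, proving the first assertion.

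For the ``in particular'' part, I would argue by contradiction: if $\mathbf{v},\mathbf{u}\in V$ satisfy $\prodscal{\mathbf{c}}{\mathbf{v}}=\prodscal{\mathbf{c}}{\mathbf{u}}$ but $\mathbf{v}\ne\mathbf{u}$, then since $\le_{lex}$ is a total order one of $\mathbf{v}<_{lex}\mathbf{u}$ or $\mathbf{u}<_{lex}\mathbf{v}$ holds. In either case the first part (with the roles possibly swapped, which is fine because both $\mathbf{v}$ and $\mathbf{u}$ lie in $V$) yields a strict inequality between the two scalar products, contradicting the assumed equality. The converse implication $\mathbf{v}=\mathbf{u}\Rightarrow\prodscal{\mathbf{c}}{\mathbf{v}}=\prodscal{\mathbf{c}}{\mathbf{u}}$ is trivial.

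There is no real obstacle here; the only subtlety is remembering that the evaluator condition \eqref{conditiononci} requires the membership $\mathbf{v}\in V$ (not $\mathbf{u}\in V$), which is why the lemma is stated with $\mathbf{v}\in V$ and $\mathbf{u}$ merely in $\N^n$, and why in the second assertion the symmetric use of the first part needs both vectors to lie in $V$.
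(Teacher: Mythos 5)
Your proof is correct and takes essentially the same route as the paper's: both split $\prodscal{\mathbf c}{\mathbf u}-\prodscal{\mathbf c}{\mathbf v}$ at the first index $i$ where $\mathbf u$ and $\mathbf v$ differ and then invoke the evaluator condition \eqref{conditiononci} for that $i$. Your lower bound is marginally cruder (you discard the $u_j$ terms to get $\ge c_i-\sum_{j>i}c_jv_j\ge 1$, whereas the paper retains them in the form $(u_i-v_i)+\sum_{j>i}c_j(u_j+(u_i-v_i-1)v_j)$), and you make explicit the harmless case $i=n$; these are cosmetic differences, and your closing remark about why $\mathbf v\in V$ is the needed hypothesis is a sound observation.
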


\begin{proof}
Suppose that  $\mathbf v <_{lex} \mathbf u$, 
that is there is $i$ such that $u_k=v_k$ for $k<i$ and  $v_i<u_i$.   Then 
\begin{align*}
\prodscal {\mathbf c} {\mathbf u} - \prodscal {\mathbf c} {\mathbf v} 
& = c_i(u_i-v_i) + \sum _{j>i}   c_j (u_j-v_j)\\
& \ge (u_i-v_i) +  (c_i - 1)(u_i - v_i - 1) + [c_i - 1 - \sum _{j>i}  c_j v_j] > 0. 
\end{align*}
(The strict inequality uses the fact that the term in brackets is nonnegative
since $\mathbf c$ is an evaluator for $V$ and $\mathbf v \in V$.)
\end{proof}

The following result allows us to transform the Lazard valuation $\val_ p (f)$ 
into the order of $f$ at $y = 0$ along a monomial curve of the form  
$K\ni y\to p +  ( y^{c_1}, \ldots ,  y^{c_{n}}) $.

\begin{prop}\label{order}
Let $f\in K [x_1,...,x_n]$, $f\ne 0$, $p  \in K^n$.
With $V \subset \N^n$ a non-empty family,
suppose that $\val_p(f) \in V$  and that $\mathbf c= (c_1, ..., c_n)\in (\N^*) ^ n$ 
is an evaluator  for $V$.  
 Then $f(p +  ( y^{c_1}, \ldots ,  y^{c_{n}}) )$ is not identically equal to zero 
and its order  at $y=0$ equals 
$\prodscal {\mathbf c} {\val_p (f)}$. 
\end{prop}

\begin{proof}
Write 
$f(x) = \sum_{\mathbf v \in \N^n} a_{\mathbf v} (x-p)^{\mathbf v}$ and denote $\Lambda = \{ \mathbf v; a_{\mathbf v} \ne 0\}$.  
Then 
\begin{align*}
f(p +  ( y^{c_1}, \ldots ,  y^{c_{n}}) )= \sum _{\mathbf v\in \Lambda} a_v y^{\prodscal {\mathbf c} 
{\mathbf v}} = 
a_{v_p(f)} y ^{\prodscal {\mathbf c} {\val_p (f)}} + \sum_{\mathbf u>_{lex} v_p(f)} a_u y ^{\prodscal {\mathbf c} {\mathbf u} }
\end{align*} 
and the proposition follows from Lemma \ref{testexponents}.
\end{proof}

\begin{prop}
Let $f\in K [x_1,...,x_n]$, $f\ne 0$, $\alpha  \in K^{n-1}$.  
Let $\mathbf v= (v_1, \ldots , v_{n-1})$ be the Lazard valuation of $f$ on $\alpha$. 
With $V' \subset \N^{n-1}$ a non-empty family,
suppose that  
$\mathbf v \in V'$ and that   ${\mathbf c'}= (c_1, ..., c_{n-1} )$ 
is an evaluator  for $V'$.   
Then, the following formula holds:
\begin{align}
f(\alpha +  ( y^{c_1}, \ldots ,  y^{c_{n-1}}),x_n )= 
y^{\prodscal {\mathbf c'} {\mathbf v}} (f_{\alpha} ^{\mathbf v} (x_n) + y R(y,x_n) ), 
\end{align} 
with $R\in K[y,x_n]$. 
\end{prop}

\begin{proof}
This follows from the definition of Lazard evaluation on $\alpha$.   
\end{proof}

We have also a parametrised version of the above result.  

\begin{prop}\label{parameterized}
Let $f\in K [x_1,...,x_n]$, $f\ne 0$, $S \subset K^{n-1}$.  
Let $\mathbf v= (v_1, \ldots , v_{n-1})$ be the minimal value 
of Lazard  valuation of $f$ on $\alpha\in S$.  
With $V' \subset \N^{n-1}$ a non-empty family,
suppose that  
$\mathbf v \in V'$ and that   ${\mathbf c'}= (c_1, ..., c_{n-1} )$ 
is an evaluator  for $V'$.   
Then, the following formula holds for all $\alpha \in S$:
\begin{align}\label{first}
f(\alpha +  ( y^{c_1}, \ldots ,  y^{c_{n-1}}),x_n )= 
y^{\prodscal {\mathbf c'} {\mathbf v}} ( f_{\alpha}^ {\mathbf v} ( x_n) + y R(\alpha, y, x_n) ),
\end{align} 
with $f_{\alpha} ^{\mathbf v} (x_n)  \in K[\alpha , x_n]$, $R\in K[\alpha, y, x_n]$. 
\end{prop}

\begin{proof}
This follows from the fact that  the coefficient $f_{\alpha} ^{u}$  in the expansion \eqref{evaluationexpansion}
 is a polynomial in $(\alpha, x_n)$. (If $K$ is of characteristic zero then it is equal to 
  $ \displaystyle {
\frac 1 {u_1! \cdots u_{n-1}!} \frac { \partial^{u_1+ \cdots + u_{n-1}} f }{\partial x_1^{u_1} \cdots \partial x_{n-1}^{u_{n-1}}}}$.)  By assumption on $\mathbf v$,   if  $u <_{lex} \mathbf v$ then $f_{\alpha} ^{u}$  vanishes identically for $\alpha\in S$.  
\end{proof}

\subsection{Proof of Theorem \ref{mainthm}}
The proof is based on the propositions stated in the previous subsection
(especially Proposition \ref{parameterized}) and the Puiseux with parameter theorem 
that we recalled in the previous section.  

Recall that, in the statement of the theorem to be proved, $x$ denotes the
$(n-1)$-tuple $(x_1, \ldots, x_{n-1})$. Write
\begin{align}\label{polynomial}
f(x,x_n) = a_d(x) x_n^d + a_{d-1}(x) x_n^ {d-1} + \cdots + a_0(x).
\end{align}
(Then $a_d(x) = l(x)$ and $a_0(x) = t(x)$.)
We fix positive integers $c_1, ..., c_{n-1}$ that satisfy the following properties.  
Firstly, we want ${\mathbf c} = (c_1, ..., c_{n-1})$ to be an evaluator for
the set $V_S$ of Lazard valuations of $f$ on $p \in S$. 
Secondly we require that ${\mathbf c}$  should be an evaluator
for $V_{g_i}= \{\val _p (g_i): p\in S\}$, for $1 \le i \le 3$, where 
$g_1(x) = D(x)$, $g_2(x) = a_d(x)$ and $g_3(x) = a_0(x)$.
Since $V_S$ and the $V_{g_i}$ are  finite such  $c_1, ..., c_{n-1}$ exist.  
Later in the course of the proof we may multiply all $c_i$ 
by a positive integer $N$.  
Then clearly the vector $(Nc_1, ..., Nc_{n-1})$ still is an
evaluator for $V_S$ and the $V_{g_i}$. 
We shall use test monomial curves, as in Proposition \ref{order}, in the proof. 
Indeed we translate
the assumed Lazard invariance of $D$, $a_d$, and $a_0$ for $p\in S$ near a fixed point
$p_0$ of $S$ into the invariance of the order of these polynomials at $y = 0$
along a suitable monomial curve parametrized by $y$ and passing through $p$,
for $p \in S$ near $p_0$.
This sets the stage for application of Lemma \ref{alonghypersurface} followed
by the Puiseux with parameter theorem. The details follow.

By the assumptions of Theorem \ref{mainthm},
$g_i$ is valuation-invariant in $S$, for $1 \le i \le 3$.
Consider  $\psi : S\times \R \to \R^{n-1}$ defined by 
\begin{align}\label{psi}
\psi (p,y) =  p +  ( y^{c_1}, \ldots ,  y^{c_{n-1}})  
\end{align}
and $g_i(\psi(p,y))$, $1 \le i \le 3$.
By Proposition \ref{order}, for $i$ and $p$ fixed,  
$g_i(\psi(p,y))$, as a function of $y\in \R$,  
has order $k_i := \prodscal {\mathbf c} {\val_p(g_i)}$ at $y=0$.
Since $g_i$ is valuation-invariant in $S$, this order is independent of $p\in S$.  
Therefore, by Lemma \ref{alonghypersurface},
applied with respect to suitable local coordinates on $S \times \R$
near $(p_0,0)$, 
$g_i(\psi(p,y))$ as an analytic function is divisible by $y^{k_i}$ 
with the quotient analytic and non-vanishing in a neighbourhood $U_S \times U'$
of $(p_0,0)$ in $S \times \R$. 
Let 
\begin{align*}
f_\psi (p,y,z) = f(\psi (p,y), z) = a_d(\psi (p,y)) z^d + a_{d-1}(\psi (p,y)) z^ {d-1} + \cdots + a_0(\psi (p,y)). 
\end{align*}
Since $a_d(\psi (p, y))$ is not vanishing identically, 
the degree in $z$ of $f_\psi $ equals $d$ and 
the discriminant $D_\psi (p,y)$ of $f_\psi$ equals $D(\psi (p,y))$.  
Therefore we may apply to 
$f_\psi $, localised at $(p_0,0) \in S\times \R$,  and after complexification,  
the Puiseux with parameter theorem in the form given by Corollary \ref{Puiseuxfinal}.  
Then we use the conclusion of Corollary \ref{Puiseuxfinal} 
to show the Lazard delineability of $f$ over a neighbourhood of $p_0$ in $S$.  
Now we present in detail this argument.  

Where $k$ denotes the dimension of $S$,
we choose local coordinates $(\hat{x}, y) = (\hat{x}_1, \ldots, \hat{x}_k,y)$
on $S \times \R$ near $(p_0,0)$.
We denote by $\hat{f}_{\psi}(\hat{x},y,z)$ the polynomial
$f_{\psi}$ expressed in these coordinates,
and by $\hat{a}_i$ and $\hat{D}_{\psi}$ its $i$th coefficient and discriminant,
respectively. 
Then the $\hat{a}_i$ are analytic in a neighbourhood of the origin in
$\R^{k+1}$. We denote by $\hat{a}_i$ also the complexification of $\hat{a}_i$,
that is, the unique complex analytic extension of $\hat{a}_i$ to a neighbourhood
of the origin in $\C^{k+1}$. 

Hence, in a suitable polydisk $U_{\varepsilon,r}$ in $\C^{k+1}$,
$\hat{f}_{\psi}$ (and $\hat{a}_d$, $\hat{a}_0$, $\hat{D}_{\psi}$) satisfy
the hypotheses of Corollary \ref{Puiseuxfinal} (as was observed above).
Therefore, by this Corollary, we have
\begin{align}
t^{dm_0} \hat{f}_\psi(\hat{x}, t^N, z) =
\hat{a}_d(\hat{x},t^N) \prod_{i} (t^{m_0} z - t^{m_i} \hat{u}_i (\hat{x},t))  
\end{align}
in $U_{\varepsilon,r^{1/N}}$,
for suitable integers $N > 0$ and $m_i \ge 0$, 
and suitable functions $\hat{u}_i(\hat{x}, t)$
analytic and nonvanishing in the specified polydisk.

Now we may write $\hat{a}_d(\hat{x},t^N)= t^{m_0}  \hat{\tilde a}_d(\hat{x},t^N)$, with
$\hat{\tilde a}_d(\hat{x},t^N)$ nonvanishing in the specified polydisk,
as in the proof of the Corollary just cited.  Hence, in our original coordinates, we have
\begin{align}\label{decompfpsi}
t^{(d-1)m_0} f_\psi(p, t^N, z) =  {\tilde a}_d(p,t^N) \prod_{i} (t^{m_0} z - t^{m_i} u_i (p,t)) 
\end{align}
in $U_S \times U'$, after suitable refinement of this neighbourhood if necessary,
where the functions ${\tilde a}_d$, $u_i$ are analytic and nonvanishing in $U_S \times U'$.
Moreover, again by the conclusion of Corollary \ref{Puiseuxfinal},  each difference  
$t^{m_i} u_i (p,t) - t^{m_j} u_j (p,t)$ for $i\ne j$ equals a power of $t$ 
times a nowhere vanishing function.  
Let $\mathbf v$ be the minimum of valuations of $f$ on $p$ for $p\in U_S$.  By \eqref{first} 
\begin{align}\label{firstconclusion}
f_\psi(p, t^ N, z)  = t^{N \prodscal {\mathbf c} {\mathbf v}} (
f_{\alpha}^ {\mathbf v} (z)  
+ t^N R(p, t^N, z) ). 
\end{align} 
By comparing  \eqref{firstconclusion} and \eqref{decompfpsi} we see that 
$N \prodscal {\mathbf c} {\mathbf v} = m_0(1-d)+ \sum_i\max\{m_0,m_i\}$.  
(The nonvanishing of the $u_i$ is used here.)
Therefore the Lazard valuation of $f$ on $p$ is independent of $p\in U_S$, 
and the Lazard evaluation of $f$ at $p$ equals 
\begin{align}
f_p^{\mathbf v} (z) = 
{\tilde a}_d(p,0) z^\nu  \prod_{m_i= m_0} (z-u_i (p,0))  \prod_{m_i <  m_0}  u_i (p,0),  
\end{align}
where $\nu$ equals the number of $i$ with $m_i>m_0$.  
By Corollary \ref{Puiseuxfinal}, each difference 
$u_i (p,0) - u_j (p,0)$, for $i,j$ such that $m_i=m_j=m_0$, 
is either non-vanishing for all $p \in U_S$ or identically equal to zero in $U_S$.  
Hence, by the last part of Corollary \ref{Puiseuxfinal} 
(concerning the complex conjugation invariance of the $u_i$), 
each $u_i(p,0)$ is 
either real for all $p$, if  $u_i(p,0)\equiv \overline u _i (p,0)$, 
or nonreal for all $p$, otherwise.
Thus we may take as the $\theta_j (p)$ of (2) of Definition \ref{delineability}
those $u_i(p,0)$ that are real, and complete 
them by $\theta (p) \equiv 0$ if $\nu$ of the equation above is strictly positive.  
This shows that $f$ is Lazard analytic delineable on $U_S$. 
The Lazard analytic delineability of $f$ on the whole of $S$ 
then follows from the connectedness of $S$.

It remains to address the proof of the second conclusion of Theorem 5.1,
which asserts the Lazard analytic delineability of $f^* = x_n f$ on $S$.
Recall that the functions $u_i(p,0)$ which determine part of 
the variety of $f_p^{\mathbf v}(z)$ for $p$ in
$S$ near $p_0$ vanish nowhere near $p_0$. If $\nu > 0$
the remaining part of the variety of $f_p^{\mathbf v}(z)$ is given by the function $z \equiv 0$.
Therefore each real root function $\theta$ of $f$ on $S$ satisfies the following:
either $\theta(p) < 0$ for all $p$,
or $\theta(p) = 0$ for all $p$,
or $\theta(p) > 0$ for all $p$.
Therefore $f^* = x_n f$ is Lazard analytic delineable on $S$.
\qed

\bigskip

An analogue of our main theorem, with slightly different hypotheses but the
same conclusion, is true. The hypotheses of this analogue are that the discriminant
$D(x)$ is valuation
invariant in $S$ and the leading coefficient $l(x)$ is nonvanishing in $S$. 
No assumption about the trailing coefficient
$t(x)$ is needed. The proof of this analogue is similar to 
(and simpler than) the proof
of our main theorem presented above, except that Theorem 4.1 is used
instead of its corollary, after dividing through by the leading coefficient
of $f_{\psi}$.
This analogue could be used to enhance the practical efficiency of Algorithm 1
in cases where some polynomial elements of the input set $A$ have leading coefficients
which vanish nowhere in $\R^{n-1}$.
As the following examples show we cannot drop the assumption about $t(x)$ 
in general.   

\begin{exmp}
With $(x_1,x_2,x_3) = (x,y,z)$,
consider $f(x,y,z) = y^2z^2 - y(2x+y) z +x(x+y)$ and let $S$ be the $x$-axis. 
Then $D(x,y) =  y^4$, $a_2(x,y) = y^2$ and 
hence both $D$ and $a_2$ are valuation invariant on $S$. 
Nevertheless $f$ is not Lazard delineable on $S$.  
Indeed, $f$ vanishes identically over $x=y=0$ but not over the generic point of $S$.  
An even simpler example is furnished by $f(x,y,z) = yz - x$, with the same $S$.
\end{exmp}

We illustrate the construction in the proof using two examples. For both examples
we take $n = 4$ and $(x_1,x_2,x_3,x_4) = (x,y,z,w)$.
\begin{exmp}
Let $f(x,y,z,w) = yw^ 2 + x w - y z^2$ and let $S\subset \R^3$ be the positive direction of the 
$z$-axis (not including the origin). 
The discriminant $D(x,y,z) =  x^2  + 4y^2z^2 $ vanishes identically on $S$.  
The valuations of $D$, the leading coefficient $a_2=y$, and the trailing coefficient 
$a_0= -y z^2$, at $(0,0,z)$, $z > 0$,  are equal to 
$(0,2,0)$, $(0,1,0)$, and $(0,1,0)$ respectively.  (At the origin they are 
$(0,2,2)$, $(0,1,0)$, and $(0,1,2)$, so we do not include the origin in $S$.)

In order to detect the decomposition of  $S\times \R$ 
into the valuation (of $f$) invariant subsets we  
may take $c_1 = 3, c_2=c_3=1$, 
so that the condition  \eqref{conditiononci} is satisfied for the sets  
$V_g= \{v_p (g) : p\in S\}$, where $g$ equals $D$, $a_2$, and $a_0$, and for 
the set $V_S$ of Lazard valuations of $f$ on $p \in S$.  In this case it means that \eqref{conditiononci} is satisfied for 
$(0,1,0)$ and $(0,2,0)$.  
The function $f_\psi$ is then given by
$$
f_\psi (p,s,w) = sw^2 + s^3 w - s (z+s)^2 = s (w^2 + s^2 w - (z+s)^2),
$$
where $p=(0,0,z)$. (Note that we use the symbol $s$ to denote the second argument
of $f_\psi$, since $y$ already denotes $x_2$.)
This case is particularly simple since by dividing by $s$ and then setting $s=0$ 
we obtain a polynomial of the same degree as $f$. 
This polynomial $w^2 -z^2$ equals  the Lazard evaluation polynomial 
$f_p(w)$, $p=(0,0,z)$, and its zeros  give the decomposition of $S\times \R$ 
into the valuation invariant subsets: 
$v_{(p,w)}(f) = (0,1,0,1)$ if  $w^2=z^2 > 0$ and $v_{(p,w)}(f) = (0,1,0,0)$ if $w^2 \ne z^2$, $z > 0$. 
(If $w=z=0$ then $v_{(p,w)}(f) = (0,1,0,2)$.)

A similar but more complicated example is $f(x,y,z,w) = xw^ 2 + y z w - x$ with the same $S$ and 
similar discriminant  $D(x,y,z) =  y^2z^2 + 4x^2 $.   Then the Lazard evaluation polynomial 
$f_p(w)=zw$ for $p=(0,0,z)$, $z > 0$, (here $z$ is treated as a constant), 
is of degree strictly smaller than the degree of $f$.  
We may take again $c_1 = 3, c_2=c_3=1$.  
The function $f_\psi$ (again with $s$ denoting
the second argument) is given by
$$
f_\psi (p,s,w) = s^3w^2 + s w(z+s) - s^3  = s (s^2 w^2 +  w (z+s)-s^2).
$$
The polynomial in parentheses has two roots for $s\ne 0$.  One of them tends to $0$ as 
$s$ tends to $0$ and the other one tends to infinity. 
In the formula \eqref{decompfpsi}, after dividing through by $t^{(d-1)m_0}$,  
this latter root of $f_\psi(p,t,w)$ has a strictly negative exponent $m_i - m_0$.  
The exponent associated to the first root is strictly positive.
\end{exmp}

\subsection{Derivation of Lazard's main claim from Theorem 5.1}
We show how the special form of Lazard's main claim described just after the
statement of Theorem 5.1 is deduced from Theorem 5.1.
We essentially adapt the proof of Theorem 3.1 of \cite{McCallum:88} in the following way.
Let $x$ denote $(x_1, \dots, x_{n-1})$.
Let $A = \{f_1, f_2, \ldots, f_m\}$ be a finite irreducible basis in $R_n$,
where $n \ge 2$, and let $S$ be a connected analytic submanifold of
$\mathbb{R}^{n-1}$. Suppose that each element of $P_L(A)$ is valuation-invariant in $S$.
If $\pm x_n \notin A$, put $f = f_1 f_2 \cdots f_m$.
Otherwise put $f = f_1 f_2 \cdots f_m / x_n$.
Then $f$ is (up to sign) 
the product of those elements of $A$ whose trailing coefficients are nonzero.
Hence the trailing coefficient $t(x)$ of $f$ is nonzero, and is valuation-invariant
in $S$, by Proposition \ref{valinvproduct}.
Similarly, the leading coefficient $l(x)$ of $f$ is valuation-invariant in $S$.
Using the well-known expression for the discriminant of a polynomial product
and Proposition 3.4,
we see also that the discriminant $D(x)$ of $f$ is nonzero and valuation-invariant in $S$.
Hence, by the first conclusion of Theorem 5.1, $f$ is Lazard analytic delineable on $S$.
If $\pm x_n \notin A$, we may use this property
to deduce the desired conclusions of Lazard's main claim.
If $\pm x_n \in A$, then by the second conclusion of Theorem 5.1,
$f^* = x_n f = f_1 f_2 \cdots f_m$ is Lazard analytic delineable on $S$,
and the desired conclusions of Lazard's main claim follow from this property of $f^*$.

\subsection{Proof of Lazard's original claim}
In this subsection we provide a proof of the {\em original} claim of Lazard,
as stated in Section 2. We first state a corollary of Theorem 5.1:

\begin{cor}\label{mainLazardthm}
Suppose  $f(x, x_n) \in \mathbb{R}[x, x_n]$ 
satisfies the assumption of Theorem \ref{mainthm}.  
Let $S$ be a connected subset of $\mathbb{R}^{n-1}$
in which $D$, $l$ and $t$ are all valuation-invariant.
Then $f$ is Lazard delineable on $S$ and is valuation invariant in every
section and sector of $f$ over $S$.
\end{cor}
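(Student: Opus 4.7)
The plan is to derive Corollary \ref{mainLazardthm} from Theorem \ref{mainthm} by a stratification argument that reduces the general connected set $S$ to a disjoint union of connected analytic submanifolds. Let $(v_D, v_l, v_t)$ denote the common Lazard valuations of $D$, $l$, $t$ on $S$. By Proposition \ref{semicontinuity} the set
\[
T = \{p \in \R^{n-1} : v_p(D) = v_D,\ v_p(l) = v_l,\ v_p(t) = v_t\}
\]
is locally closed and semi-algebraic (it is the difference of an algebraic set and a finite union of algebraic sets, since the valuations of each of $D$, $l$, $t$ take only finitely many values), and it contains $S$ by hypothesis. Standard Whitney stratification of semi-algebraic sets then provides a finite decomposition $T = \bigsqcup_\alpha \Sigma_\alpha$ into pairwise disjoint connected real-analytic submanifolds of $\R^{n-1}$.

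On each stratum $\Sigma_\alpha$ the polynomials $D$, $l$, $t$ are valuation-invariant with values $(v_D, v_l, v_t)$, so Theorem \ref{mainthm} applies: $f$ is Lazard analytic delineable on $\Sigma_\alpha$, with a well-defined Lazard valuation of $f$ on every $p \in \Sigma_\alpha$, a constant number $k_\alpha$ of distinct real roots of $f_p$, and constant multiplicities $m_{1,\alpha}, \ldots, m_{k_\alpha, \alpha}$. These invariants at a point $p$ depend only on $f$ and $p$, so they are intrinsic to $p$ and agree with $(k_\alpha, m_{i,\alpha})$ whenever $p \in \Sigma_\alpha$.

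To lift this stratum-wise constancy to global constancy on $S$, I will combine connectedness of $S$ with continuity. Once the Lazard valuation of $f$ on $p$ is shown constant on $S$, the Lazard evaluation $f_p$ depends polynomially on $p$ via the expansion of Remark \ref{secondremark}, and classical continuity of roots of polynomials whose coefficients vary continuously implies that the real roots of $f_p$ and their multiplicities vary continuously in $p \in S$. Combined with stratum-wise constancy and connectedness of $S$, this forces the number of real roots and the multiplicity sequence to be globally constant. The Lazard sections $\theta_i : S \to \R$ are then continuous, and valuation-invariance of $f$ on these sections and on the sectors between them follows from the analogous stratum-wise property coming from Theorem \ref{mainthm}.

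The hard part is verifying consistency of the invariants, and in particular constancy of the Lazard valuation of $f$ on $p$, across different strata of $T$ that $S$ may visit. The key tool will be the upper semi-continuity of the Lazard valuation (Proposition \ref{semicontinuity}): the Lazard sections on a higher-dimensional stratum cannot escape to infinity (because $l$ is valuation-invariant on $S$) nor collide with one another (because $D$ is valuation-invariant on $S$), so they extend continuously to frontier points lying on lower-dimensional strata visited by $S$, which forces the invariants to match on adjacent strata that $S$ crosses.
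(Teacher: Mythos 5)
Your high-level route is the same as the paper's: stratify the locus on which the valuations of $D$, $l$, $t$ take their common values on $S$ into connected analytic submanifolds, apply Theorem \ref{mainthm} on each stratum, and then try to glue the stratum-wise invariants across the frontier. Up to that point you are on track (the paper uses a semialgebraic stratification with the frontier condition rather than requiring Whitney regularity, and replaces $S$ by the union $S_1$ of all strata meeting $S$, but these are cosmetic differences).

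The gap is in the gluing step, which is exactly where the real work lies. You argue that upper semi-continuity of the Lazard valuation (Proposition~\ref{semicontinuity}), together with valuation-invariance of $l$ and $D$ preventing root escape and root collision, forces the Lazard sections to extend continuously to frontier strata and hence the invariants to match. But Proposition~\ref{semicontinuity} only gives that $\{p : v_p(f) \ge_{lex} v\}$ is closed: if $p_n \to p_0$ with $v_{p_n}(f)$ constant along the approach, you conclude only $v_{p_0}(f) \ge_{lex} \lim v_{p_n}(f)$, not equality; you have no a priori bound in the other direction. Moreover, the ``continuity of roots'' you invoke concerns the Lazard evaluation $f_p$, which by Remark~\ref{secondremark} is the coefficient $f^{\mathbf v}$ in the expansion of $f$ indexed by the Lazard valuation $\mathbf v$ on $p$ --- that is, the polynomial $f_p$ itself changes discontinuously precisely when the valuation on $p$ jumps. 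So the continuity argument presupposes the very constancy of the valuation on $p$ that it is supposed to establish; there is a circularity. The paper resolves this by reducing to two adjacent strata $T_i \subset \overline{T_j}$, then applying the curve selection lemma to pass to a single analytic arc $p(y)$ crossing from $T_j$ into $T_i$, and finally re-running the entire monomial-curve plus Puiseux-with-parameter machinery of the proof of Theorem~\ref{mainthm} along $\psi(y,s) = p(y) + (s^{c_1}, \ldots, s^{c_{n-1}})$. It is this second application of Corollary~\ref{Puiseuxfinal} along the arc --- not a soft continuity or semi-continuity argument --- that actually yields the matching of the Lazard valuation on $p$, the section count, and the multiplicities across the frontier. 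Your sketch does not supply a substitute for it.
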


\begin{proof}
This corollary follows from Theorem \ref{mainthm} by standard arguments of semialgebraic geometry:  
stratifications and the curve selection lemma.  

First we note that there is a finite semialgebraic stratification $\sqcup_i {T_i} = \R^{n-1}$ such 
that  $D$, $l$ and $t$ are all valuation-invariant on each stratum.  Here, by definition of stratification, each $T_i$ is a connected locally closed semialgebraic subset and an analytic submanifold of $ \R^{n-1}$ and any two strata satisfy the frontier condition: if $T_i\cap \overline T_j \ne \emptyset$ then $T_i\subset \overline T_j$.  
The existence of such a stratification follows from general theory of semialgebraic sets, see e.g. Proposition 
9.1.8 of \cite{BCR:98} or Proposition 2.5.1 of \cite{BR:90}.  

Now for  $ S \subset \mathbb{R}^{n-1}$  
as in the assumption of corollary we consider $S_1 = \sqcup_{i\in \Lambda} {T_i} $, 
the union of all strata intersecting $S$.   
Clearly $S_1$ is connected, $D$, $l$ and $t$ are all valuation-invariant in $S_1$, 
and Lazard delineability on $S_1$ implies Lazard delineability on $S$.  
Thus we may replace $S$ by $S_1$.  

In order to show Lazard delineability on a connected union of strata it suffices to show it on 
$S$ of the form $S = T_i\cup  T_j$ with $T_i\subset \overline T_j$.  Thus suppose that $S = T_i\cup  T_j$  
and that 
$D$, $l$ and $t$ are valuation-invariant on $S $.  We show that  $f$ is delineable on  
$S$.  For this it is enough to show that   the Lazard valuation on $\alpha $ is 
independent of $\alpha \in S$, that the number of sections of $f_\alpha$ over $T_i$ and $T_j$ coincide, 
that these sections are given by functions continuous on whole $S$, and finally that their multiplicities 
as roots of $f_\alpha$ are the same on $T_i$ and $T_j$.  
By standard arguments based on the curve selection lemma, see e.g. Proposition 8.1.13 of \cite{BCR:98},
it suffices to show all these claims over $p([0,\varepsilon))$, 
where  $p(\tau) : (-\varepsilon,\varepsilon) \to S$ is 
an arbitrary real analytic curve such that $p(0) \in T_i$ and $p(\tau) \in T_j$ for $\tau> 0$. 
By replacing $\tau$ by $\tau^2$ we may assume that $p(\tau) \in T_j$ for $\tau\ne 0$.  
Then we  follow  the main steps of the proof of Theorem \ref{mainthm}.  
Let $\psi:(-\varepsilon,\varepsilon)  \times \R \to \R^{n-1}$ be defined by 
\begin{align*}
\psi (\tau,y) = p(\tau) +  ( y^{c_1}, \ldots ,  y^{c_{n-1}})  ,
\end{align*}
where $(c_1, ..., c_{n-1})$  is an evaluator for the sets $V_g= \{\val _p (g): p\in S\}$, with
$g(x) = D(x), l(x) \text{and } t(x)$, and  an evaluator for the set $V_S$ of Lazard valuations of $f$
on $p \in S$.
Since $D$, $l$ and $t$ are valuation-invariant on 
the image of $p(\tau)$, $f_\psi (\tau,y,z) = f(\psi (\tau,y), z) $ 
satisfies the assumptions of Puiseux with parameter theorem, 
Corollary  \ref{Puiseuxfinal}.   Then the proof of Theorem \ref{mainthm}  shows  
that $f$ is Lazard delineable on the image of $p(\tau)$, 
and hence, by the curve selection lemma, on $S$.    
\end{proof}

Lazard's original claim follows from the above corollary
by analogy with the argument presented 
in Subsection 5.3 above.

\section{Conclusion}
We first summarise the work reported herein.
We presented the results of our
investigation of Lazard's proposed CAD method,
including both his proposed projection and valuation. 
In \cite{McCallum_Hong:16} we already found that
Lazard's projection is valid for CAD construction for well-oriented polynomial sets. 
In the present paper (Section 5) Lazard's main claim is proved using his valuation.
A consequence of this result is that Lazard's CAD method is valid, with no
well-orientedness restriction.

There are immediate consequences of our main result for certain related
works on projection in CAD.
For example, as similarly mentioned in \cite{McCallum_Hong:16} (Section 4),
our main result could be readily adapted to obtain an analogue of the theorem
of \cite{McCallum:99} concerning the reduction of projection sets
in CAD-based quantifier elimination in the presence of equational constraints.
A further significant potential benefit of Lazard's method 
is that it may permit {\em greater} simplifications and improvements to projection
for such problems. Indeed, the newly validated approach which
underpins improved projection using valuation invariance may yield better results
concerning the use of so-called {\em propagated} constraints for such
problems \cite{McCallum:01}.

It is natural to ask how Lazard's CAD method compares with other reduced projection
CAD algorithms with respect to efficiency and other criteria.
As mentioned, Lazard's method is more general than those of McCallum 
\cite{McCallum:84, McCallum:88, McCallum:98} and Brown \cite{Brown:01}
in the sense that the latter algorithms fail for non-well-oriented input sets $A$.
The algorithm of \cite{McCallum:84} was subjected to a theoretical computing
time analysis which was broadly based upon that provided by Collins
\cite{Collins:75} for his original CAD algorithm.
The overall conclusion was that the method of \cite{McCallum:84} remains of
doubly exponential worst case time complexity in the number of variables $n$,
as is the case for Collins' original CAD, 
though the double exponent in the computing time bound is reduced.
Still, for every fixed $n$, both algorithms have a polynomial worst case computing
time bound. (Interested readers are referred to the very recent paper
\cite{BDEMW:16} which contains an improved exposition of the complexity analysis of
\cite{McCallum:84}.) To our knowledge, no computing time analysis of
Brown's method, nor that of Lazard, has yet been published.
Nonetheless, given that both sets $P_{BM}(B)$ and $P_L(B)$ contain the discriminants,
and resultants of pairs of distinct elements, of an irreducible basis $B$,
it is likely that the computing times of the methods of both Brown and Lazard
remain doubly exponential in $n$, perhaps with slight improvements to the double
exponent, relative to McCallum's method.

Further work could usefully be done in a number of directions.
As mentioned and elaborated in \cite{McCallum_Hong:16},
it will be interesting to compare experimentally the Brown-McCallum
projection \cite{Brown:01} with the Lazard projection.
As mentioned above, it would be worthwhile to try to extend the theory 
of equational constraints, especially the use of propagated constraints,
with the Lazard projection. Similarly, re-examination of the theory
of bi-equational constraints \cite{Brown_McCallum:05, Brown_McCallum:09} 
in the context of Lazard's projection and valuation
may be fruitful. Re-visiting ideas for practical improvements
suggested in \cite{Lazard:94} could be beneficial.

Another inportant direction will be to understand the topological and geometric structure 
of the output of the CAD algorithm.  This concerns all the methods, not only the  Lazard one presented in this paper, see \cite{Lazard:10}.  
To study it will be interesting to use the new ideas proposed in a recent paper on Zariski equisingularity and stratifications  \cite{PP15}.  

\section{Appendix.  Proof of Puiseux with parameter theorem}

In this section, for the reader's convenience, we present a concise
proof of Theorem \ref{PuiseuxTheorem}. This proof is based on 
the classical theory of complex analytic functions and uses a parametrized version of the Riemann removable singularity theorem.  The proof we present below is due to  \L ojasiewicz and Paw\l ucki, see \cite{Pawlucki:84}.  
For a similar approach, with slightly different details,  see  \cite{PR:12} Proposition 2.1. 

Let us first recall the basic notation: 
$U_{ \varepsilon, r} =  U_{\varepsilon} \times U_r  $, where 
$ U_\varepsilon = \{x = (x_1, \ldots, x_k) \in \C^k:  |x_i|< \varepsilon_i, \forall i\}$, 
$U_r= \{y\in \C: |y|<r\}$. 
We also denote the punctured disc $U_r \setminus \{0\}$ by  $U^*_r $.

\begin{proof}[Proof of Theorem \ref{PuiseuxTheorem}]
Consider the polynomial in $z$, 
$$P(x,w,z):= f(x, e^{2\pi i w},  z),$$
 whose coefficients $a_i( x, e^{2\pi i w})$ 
are analytic on $U_\varepsilon \times H$, 
where $H = \{w: 2\pi  {\rm Im} (w) > - \ln r\}$.  By assumption the discriminant 
$D_P(x,w) = D _f(x, e^{2\pi i w})$ 
does not vanish on  $U_{\varepsilon}\times H$ 
and hence $P$ admits  global complex analytic roots 
 $$\tilde \xi_1(x, w), \cdots, \tilde\xi_d(x, w).$$
(If  $D_P(x,w)$ is nonzero then the equation $P= \partial P/\partial z=0$ 
has no solution.  
Therefore the local solutions $\tilde \xi (x,w)$ of $P=0$ are analytic by the implicit
function theorem.  
They are well-defined global analytic functions 
because $U_{\varepsilon}\times H$ is contractible.)
 
 The coefficients of $P$ are periodic: $P(x,w+1,z) = P(x,w,z)$.  Hence for each root $\tilde \xi_i(x,w)$ there is another 
 root  $\tilde \xi_{\varphi(i) }(x,w)$ such that $ \tilde \xi_{i }(x,w+1)= \tilde \xi_{\varphi(i) }(x,w)$.   
 The map $\varphi : \{1, \ldots, d\} \to \{1, \ldots, d\}$ is a permutation and hence 
 $\varphi ^{d!}= id$.  Therefore, for $N=d!$, 
 \begin{align}\label{periodic}
 \tilde \xi_{i }(x,w+N)= \tilde  \xi_{i}(x,w). 
 \end{align}
Thus there are analytic functions 
$\xi_i (x,t):U_{\varepsilon} \times U^*_{r^{1/N}} \to \C$ such that 
$\tilde  \xi_{i}(x,w) = \xi_i (x,e^{2\pi i w/N})$.  
Since  $\xi_i (x,t)$ are roots of $f(x,t^N,z) $ they 
are bounded on  $U_{\varepsilon_0} \times U^*_{r_0^{1/N}} $, 
for every $\varepsilon_0< \varepsilon$, $r_0<r$.  
Hence they extend to functions analytic on  $U_{\varepsilon} \times U_{r^{1/N}}$ 
by Riemann's theorem on removable singularities, cf.  \cite{GRossi:09} Theorem 3, p. 19.
\end{proof}

\section*{Acknowledgements}

We acknowledge grants which supported visits to the University of Sydney by the
second named author in 2015 and 2016 : 
Sydney University BSG, IRMA Project ID: 176623 and  ANR project STAAVF (ANR-2011 BS01 009).
We are grateful to Hoon Hong for kindly granting us permission to include
some basic material from the technical report \cite{McCallum_Hong:15}
in Section 3.

\end{document}